\title[Global classical solutions to quadratic systems]{Global classical solutions to quadratic systems with mass control in arbitrary dimensions}
\author{Klemens Fellner, Jeff Morgan, Bao Quoc Tang}
\address{Klemens Fellner \hfill\break
	Institute of Mathematics and Scientific Computing, University of Graz, Heinrichstrasse 36, 8010 Graz, Austria}
\email{klemens.fellner@uni-graz.at}
\address{Jeff Morgan \hfill\break
Department of Mathematics, 
University of Houston, Houston, Texas 77004, USA}
\email{jmorgan@math.uh.edu}
\address{Bao Quoc Tang \hfill\break
	Institute of Mathematics and Scientific Computing, University of Graz, Heinrichstrasse 36, 8010 Graz, Austria}
\email{quoc.tang@uni-graz.at} 
\newcommand{\R}{\mathbb R}	
\renewcommand{\O}{\Omega}
\newcommand{\pa}{\partial}
\newcommand{\wh}{\widehat}
\newcommand{\wt}{\widetilde}
\newcommand{\vr}{\varrho}
\newcommand{\mb}{\boldsymbol}
\newcommand{\nx}{\nabla_{\!x}}
\newtheorem{theorem}{Theorem}[section]
\newtheorem{definition}{Definition}[section]
\newtheorem{lemma}{Lemma}[section]
\newtheorem{proposition}{Proposition}[section]
\newtheorem{remark}{Remark}[section]
\begin{document}
\subjclass[2010]{35A01, 35K57, 35K58, 35Q92}
	\keywords{Reaction-diffusion systems; Classical solutions; Global existence; Slowly-growing a-priori estimates; Mass dissipation}

	\begin{abstract}
	The global existence of classical solutions to reaction-diffusion systems in arbitrary space dimensions is studied. The nonlinearities are assumed to be quasi-positive, to have (slightly super-) quadratic growth, and to possess a mass control, which includes the important cases as mass conservation and mass dissipation. Under these assumptions, the local classical solution is shown to be global and, in case of mass conservation or mass dissipation, to have $L^{\infty}$-norm growing at most polynomially in time. Applications include skew-symmetric Lotka-Volterra systems and quadratic reversible chemical reactions.
\end{abstract}

	\maketitle
	
	\tableofcontents
	
\section{Introduction and Main results}
In this paper, we study the global existence of classical solutions 
to a class of nonlinear reaction-diffusion systems.
Let $\Omega\subset\mathbb R^n$, $n\geq 1$, be a bounded domain and $u_i: Q_T:= \Omega\times (0,T) \to \R$, $i=1,\ldots, N$ be the $i$-th concentration. Denote by $u = (u_1, \ldots, u_N)$ the vector of concentrations. We consider the following reaction-diffusion system
	\begin{equation}\label{eq1}
		\begin{cases}
			\partial_t u_i - d_i\Delta u_i  = f_i(u), &\quad (x,t)\in Q_T,\\
			\nx u_i \cdot \nu = 0, &\quad (x,t)\in \partial\O\times (0,T),\\
			u_i(x,0) = u_{i0}(x), &\quad x\in\Omega,
		\end{cases}
	\end{equation}
where $d_i > 0$ are the diffusion coefficients, and the domain, the initial data and the nonlinearities satisfy the following assumptions:
	\begin{enumerate}[label=(A\theenumi),ref=A\theenumi]
		\item\label{A1} (Smooth Domain) Either $\Omega = \R^n$ or $\Omega\subset \R^n$ is a bounded domain with smooth boundary $\pa\O$ (i.e. $\pa\O$ is of $C^\infty$ class), such that $\Omega$ lies locally on one side of $\partial\Omega$\footnote{Naturally, the zero-flux boundary condition is only considered in case of bounded domain.}. 
		\item\label{A2} (Bounded, Nonnegative Initial Data) For all $i=1\ldots N$: $u_{i0}\in L^{\infty}(\Omega)$ and $u_{i0}(x) \geq 0$ for a.e. $x\in\Omega$.
		\item \label{A3} (Mass Control) There exist $K_0\geq 0$, $K_1\in \mathbb R$ such that
\begin{equation*}
\sum_{i=1}^{N}f_i(u) \leq K_0 + K_1\sum_{i=1}^{N}u_i, \qquad \text{for all}\quad 
u\in \R^N_+:= [0,\infty)^N.	
\end{equation*}
		\item\label{A4} (Local Lipschitz and Quasi-positivity) For all $i=1,\ldots, N$, $f_i(u)$ is locally Lipschitz and 
\begin{equation*}
f_i(u) \geq 0, \qquad \text{for all}\quad u\in \R^N_+ \quad  \text{satisfying}\quad u_i = 0.
\end{equation*}
\item\label{A5} ((Super)-quadratic Growth) There exist $\varepsilon>0$ sufficiently small and $K>0$ such that
\begin{equation*}
		|f_i(u)| \leq K(1+|u|^{2+\varepsilon}), \qquad \text{for all } i =1,\ldots, N, \quad u \in \mathbb R^N.
\end{equation*}
\end{enumerate}

The mass control assumption \eqref{A3} is a generalisation both to the condition of {\it mass conservation}
\begin{equation}\label{A3bis}\tag{A3'}
	\sum_{i=1}^N f_i(u) = 0
\end{equation}
and to the condition of {\it mass dissipation}
\begin{equation}\label{A3bisbis}\tag{A3''}
	\sum_{i=1}^{N} f_i(u) \leq 0.
\end{equation}

\begin{definition}\label{def.sol}
We call a function $u = (u_1, \ldots, u_N)$ a classical solution to \eqref{eq1} on $(0,T)$ if $\forall p>n$ we have $u_i \in C([0,T);L^{p}(\O)) \cap  C^{\infty}((0,T)\times \overline{\O})$ 
and $u$ satisfies each equation in \eqref{eq1} pointwise, cf. \cite{pazy}.
\end{definition}

The main result of this paper is the following theorem.
\begin{theorem}[Global existence of classical solutions] \label{thm:main}\hfill\\
Under Assumptions \eqref{A1}--\eqref{A5}, there exists a unique global classical solution to \eqref{eq1}. Moreover, 
\begin{itemize}
	\item[(i)] if $K_1 = 0$ in \eqref{A3}, then there exist a constant $L_0$ depending on $\O, n, N, d_i, K, K_0, \varepsilon$ and $\|u_{i0}\|_{L^{\infty}(\O)}$, and a constant $\xi >0$ such that
	\begin{equation*}
	\|u_i(t)\|_{L^\infty(\O)} \leq L_0(1+t^{\xi}) \qquad \text{ for all } \quad t\geq 0 \quad\text{ and for all }\quad i=1,\ldots, N.
	\end{equation*}
	
	\item[(ii)] if $K_0=0$ and $K_1 < 0$ in \eqref{A3}, the global solution decays to zero exponentially as $t\to \infty$, i.e. there exist $L_1 > 0$ and $\mu>0$ such that
	\begin{equation*}
		\|u_i(t)\|_{L^\infty(\Omega)} \leq L_1e^{-\mu t} \quad \text{ for all } \quad t\geq 0, \quad {and} \quad i=1,\ldots, N.
	\end{equation*}
\end{itemize}
\end{theorem}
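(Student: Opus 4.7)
The plan reduces Theorem \ref{thm:main} to obtaining an a priori $L^\infty$ bound on each finite time interval $[0,T]$; the $C^\infty$-regularity in Definition \ref{def.sol} and uniqueness then follow from standard parabolic bootstrap and the usual blow-up alternative. Local existence and uniqueness of a nonnegative classical solution is classical under \eqref{A1}, \eqref{A2}, \eqref{A4}: the semigroup approach in $L^p(\O)$ with $p>n$ (as in \cite{pazy}) provides a local solution in the sense of Definition \ref{def.sol}, while the quasi-positivity \eqref{A4}, via truncation of the reaction or a standard comparison argument, yields $u_i\geq 0$.

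The first a priori estimate is on the total mass $M(t)=\int_\O \sum_i u_i\,dx$: integrating \eqref{eq1} and applying \eqref{A3} gives $M'(t)\leq K_0|\O|+K_1 M(t)$, so $M(t)$ grows at most linearly in $t$ when $K_1\leq 0$ and decays exponentially when $K_0=0$, $K_1<0$. The next, decisive step is an improved space-time bound via Pierre's duality method: the combination $w=\sum_i d_i u_i$ satisfies $\pa_t w-\Delta w\leq K_0+K_1\sum_i u_i$, and testing against solutions of the dual backward heat problem yields $\sum_i u_i \in L^{2+\delta}(Q_T)$ for some $\delta>0$ depending on $n$, $\O$, and the $d_i$, with norm controlled by the initial mass and a power of $T$.

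The heart of the argument, and the main obstacle, is to bootstrap from this $L^{2+\delta}$ base up to $L^\infty$ against the super-quadratic reaction $|f_i(u)|\leq K(1+|u|^{2+\varepsilon})$. The strategy signalled by the paper's keywords is to establish a \emph{slowly growing} $L^p$ estimate, bounding $\|u_i\|_{L^p(Q_T)}$ by a constant whose dependence on $p$ is tightly controlled (for example, polynomial in $p$). Concretely, one tests the equation with $u_i^{p-1}$, or iterates maximal $L^q$--$L^r$ regularity, using that $u_i\in L^p$ implies $f_i(u)\in L^{p/(2+\varepsilon)}$, and couples this with Gagliardo--Nirenberg interpolation against the base $L^{2+\delta}$ bound. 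The delicate point is that the small surplus $\varepsilon>0$ in \eqref{A5} is precisely what keeps the exponents in the iteration strictly increasing while the associated constants grow slowly enough that the scheme reaches every $L^p$, and finally $L^\infty$ via Moser-type iteration or standard parabolic smoothing. The borderline character of the purely quadratic case ($\varepsilon=0$) in arbitrary dimension is exactly why this step is the technical core.

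The two sharper statements then follow by bookkeeping how constants depend on $T$. For (i) with $K_1=0$, the linear mass bound propagates into a polynomial-in-$T$ duality estimate, and each bootstrap step multiplies by a further polynomial factor, producing an $L^\infty$ bound of the form $L_0(1+t^\xi)$. For (ii) with $K_0=0$, $K_1<0$, the same machinery is run on sliding time windows $[\tau,\tau+1]$ with $\tau\to\infty$: the exponential decay of $M(t)$ propagates through the duality estimate and the bootstrap, yielding an exponential $L^\infty$ decay rate $\mu>0$ that can in principle be expressed in terms of $K_1$ and the diffusion coefficients.
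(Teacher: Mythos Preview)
Your plan has a genuine gap at its core. The duality step indeed yields $\sum_i u_i\in L^{2+\delta}(Q_T)$ for some small $\delta>0$ (this is the improved duality of \cite{CDF14}), but the subsequent bootstrap from $L^{2+\delta}$ to $L^\infty$ does \emph{not} close in dimensions $n\ge 3$ when the nonlinearity is quadratic or super-quadratic and the diffusion coefficients are arbitrary. Concretely: if $u\in L^p(Q_T)$ then $f_i(u)\in L^{p/(2+\varepsilon)}(Q_T)$, and maximal parabolic regularity for the heat equation returns $u\in L^r(Q_T)$ with $\frac{1}{r}=\frac{2+\varepsilon}{p}-\frac{2}{n+2}$. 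This is an improvement ($r>p$) only when $p>(1+\varepsilon)\frac{n+2}{2}$, which for $n\ge 3$ lies strictly above the base exponent $2+\delta$. Gagliardo--Nirenberg interpolation against the $L^{2+\delta}$ bound does not rescue this: the same Sobolev bookkeeping governs the Moser iteration, and the scheme stalls below the threshold. This obstruction is precisely why global classical solutions for \eqref{eq1} under \eqref{A1}--\eqref{A5} in arbitrary dimension were open, and why the references you would need (\cite{CDF14,GV10,PSY17}) all impose extra hypotheses (close diffusivities, low dimension, or an entropy inequality).

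You also have the role of $\varepsilon$ backwards: \eqref{A5} is an \emph{upper} bound $|f_i(u)|\le K(1+|u|^{2+\varepsilon})$, so $\varepsilon>0$ allows \emph{stronger} nonlinearities and makes the problem harder, not easier. The paper requires $\varepsilon$ to be sufficiently small so that a certain exponent stays below $1$; it is a concession, not a surplus that drives the iteration. The actual proof bypasses duality and $L^p$ bootstrap entirely. Following Kanel \cite{Kan90}, it introduces auxiliary potentials $v_i$ solving $\partial_t v_i-d\Delta v_i=u_i$ with $d>\max_i d_i$, and writes $\sum_i u_i = z-\Delta v_d$ where $z$ solves a linear heat equation (hence bounded) and $v_d=\sum_i(d-d_i)v_i$. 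The key step is that $v_d=d\wh z-\wh u$ with $\wh u(x,t)=\int_0^t\sum_i d_i u_i\,ds$ solving a \emph{non-divergence} parabolic equation $b\,\partial_t\wh u-\Delta\wh u=\sum_i u_{i0}+K_0t$ whose coefficient $b=\sum u_i/\sum d_iu_i$ is uniformly bounded above and below; Krylov--Safonov/De Giorgi then gives H\"older continuity of $\wh u$ and hence of $v_d$. A gradient interpolation lemma (Lemma~\ref{lem:main}) turns this H\"older bound into $|\nabla v_d|\le C_T|U|^{(1-\delta)/(2-\delta)}$, and a second application to each $u_i$ plus a semigroup estimate yields $|\Delta v_d|\le C_T(1+|U|^{\lambda})$ with $\lambda=\frac{3+\varepsilon}{4}+\frac{1-\delta}{2(2-\delta)}<1$ for $\varepsilon$ small. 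The inequality $|U|\le C_T(1+|U|^{\lambda})$ then closes with $|U|\le C_T$ growing polynomially in $T$. The general mass-control case \eqref{A3} is reduced to the equality case \eqref{A3eq} by the change of unknowns $w_i=e^{-K_1t}u_i$ and by appending one extra equation that absorbs the slack in the inequality.
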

\begin{remark}[Generalisations]\label{thm:gen}\hfill
\begin{itemize}[leftmargin=7mm]
	\item[(i)] The results of Theorem \ref{thm:main} can be directly generalised to the case where \eqref{A3} is replaced by the assumption: There exists $(\alpha_i)\in (0,\infty)^N$ such that
	\begin{equation}\label{genA4}
	\sum_{i=1}^{N}\alpha_i f_i(u) \leq K_0 + K_1\sum_{i=1}^Nu_i,\quad \text{ for all }\quad 
	u\in \R^N_+. 
	\end{equation}
	
	\item[(ii)] (Boundary conditions) The results in Theorem \ref{thm:main} are proved for homogeneous Neumann boundary conditions, but the same results can also be obtained for homogeneous Dirichlet boundary conditions. Note that the proof of Theorem \ref{thm:main} doesn't apply directly to mixed boundary conditions. 
\end{itemize}
\end{remark}


The question of global existence for reaction-diffusion systems is a classical topic, yet still poses a lot of open and challenging problems. A main difficulty in studying global existence for reaction-diffusion systems are the lack of maximum principle estimates or invariant regions, i.e. a structural inapplicability of techniques from scalar equations. Let us mention some classical results from e.g. Rothe \cite{Rot84}, Amann \cite{Ama85}, Hollis, Martin and Pierre \cite{HMP87} or Morgan \cite{Mor89}. Most of these works assume some technical assumptions on the nonlinearities. Since the work of Pierre and Schmidt \cite{PS97}, more attention has been paid by mathematicians to study systems satisfying the only natural assumptions of mass control \eqref{A3} (or mass dissipation \eqref{A3bisbis}) and positivity preservation \eqref{A4}. 

It was already pointed out in \cite{PS97} that these two assumptions alone are not enough to prevent blow up (in $L^\infty$-norm), therefore some growth restrictions on the nonlinearities are necessary. In particular, the case of quadratic nonlinearities is of interest due to its relevance in many applications such as chemical reactions or Lotka-Volterra type systems (see Section \ref{sec:appl} for more details). Under \eqref{A1}--\eqref{A5}, \cite{DFPV07} showed the global existence of weak solutions. More generally, \cite{Pie03} showed the existence of global weak solutions as long as the nonlinearities belong to $L^1(Q_T)$. The global existence of classical (or strong) solutions to \eqref{eq1} usually requires additional assumptions to \eqref{A1}--\eqref{A5}. Such assumptions are, for instance, small space dimensions ($n=1,2$) (see \cite{GV10,PSY17,Tan17a}), or quasi-uniform diffusion coefficients (see \cite{CDF14,FLS16}), or the close-to-equilibrium regimes (see \cite{CC17,Tan17}). We refer the interested reader also to the excellent review \cite{Pie-Survey}. Interestingly, global existence of classical solution to \eqref{eq1} under Assumptions \eqref{A2}-\eqref{A3bis}-\eqref{A4}-\eqref{A5} in the case $\Omega = \R^n$ was already solved in \cite{Kan90}, but it went almost unnoticed until recently. 
Our main Theorem \ref{thm:main} extends the results of \cite{Kan90} to the case of mass control and bounded domains, and moreover gives polynomial bounds for the growth-in-time of the $L^\infty$-norm of solutions in case $K_1 = 0$, and exponential decay in case $K_0=0$ and $K_1 < 0$.


Finally, we remark two recent related results of global classical solutions to nonlinear reaction-diffusion equations. 
Firstly, Caputo, Goudon and Vasseur \cite{CGV} used De Giorgi's methods to prove global 
classical solutions in case $\Omega = \mathbb R^n$ under assumptions
\eqref{A2}-\eqref{A3bis}-\eqref{A4}-\eqref{A5}, yet by additionally assuming 
the  \emph{entropy inequality} 
\begin{equation}\label{en_dis}
\sum_{i=1}^{N}f_i(u)\log u_i \leq 0.
\end{equation}
Although this entropic structure  appears naturally in many applications, it is not always satisfied, for example, in the case of skew-symmetric Lotka-Volterra systems (see Section \ref{sec:appl}).
Secondly, and more recently, Souplet presented in \cite{Sou18} the existence of global smooth solutions for 
at most quadratic nonlinear systems under the mass dissipation \eqref{A3bisbis} instead of \eqref{A3bis}, but still needs the entropy dissipation \eqref{en_dis}, yet with a simplified proof. 
\medskip

\medskip
Let us now describe the main ideas in proving Theorem \ref{thm:main}. The proof of Theorem~\ref{thm:main} crucially utilises the following lemma.
\begin{lemma}[Regularity Interpolation] \label{lem:main}\hfill\\
For some constant $d>0$, let $u$ be the solution to the inhomogeneous linear heat equation
\begin{equation}\label{heat-eq}
\begin{cases}
u_t - d\Delta u = \phi(x,t), &\quad(x,t)\in Q_T,\\
\nx u \cdot \nu = 0, &\quad(x,t)\in \pa\O\times (0,T)\\
u(x,0) = u_0(x), &\quad x\in\O.
\end{cases}
\end{equation}
Assume that
\begin{itemize}[topsep=5pt, leftmargin=10mm]
\item[(i)] there exists a H\"older exponent $\gamma \in [0,1)$ such that for all $x, x' \in \Omega$, and all $t\in (0,T)$, 
\begin{equation}\label{AA1}
|u(x,t) - u(x',t)| \leq H|x - x'|^{\gamma},
\end{equation}
	\item[(ii)] the inhomogeneity satisfies 
\begin{equation}\label{AA2}
\sup_{Q_T}|\phi(x,t)| \leq F.
\end{equation}
\end{itemize}
Then, the following uniform gradient estimate follows:
\begin{equation*}
	|\nx u(x,t)| \leq C\|u_0\|_{C^1(\Omega)} + BH^{\frac{1}{2-\gamma}}F^{\frac{1-\gamma}{2-\gamma}}, \qquad \text{for all }\quad (x,t)\in Q_T,
		\end{equation*}
where $B>0$ and $C>0$ are constants depending only on $\O$, $n$, $d$ and $\gamma$.
\end{lemma}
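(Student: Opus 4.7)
The plan is to combine Duhamel's formula with an optimisation in a time parameter $\tau$. Let $S(t)$ denote the Neumann heat semigroup associated with $-d\Delta$ on $\Omega$, and $K(x,y,t)$ its kernel. On a $C^\infty$ bounded domain (assumption \eqref{A1}), one has the classical Gaussian bounds $|K(x,y,t)|\leq C t^{-n/2}e^{-c|x-y|^2/t}$ and $|\nabla_x K(x,y,t)| \leq C t^{-(n+1)/2} e^{-c|x-y|^2/t}$, together with the mass-conservation identity $\int_\Omega K(x,y,t)\,dy = 1$, which yields the crucial cancellation $\int_\Omega \nabla_x K(x,y,t)\,dy = 0$. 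These are the analytic backbone. (On $\Omega=\mathbb R^n$ they are just the standard Gauss kernel identities.)

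For any $\tau \in (0,t]$, Duhamel gives $u(\cdot,t) = S(\tau) u(\cdot,t-\tau) + \int_{t-\tau}^t S(t-s)\phi(\cdot,s)\,ds$, so that, after applying the cancellation identity to symmetrise the first term,
\begin{equation*}
\nabla u(x,t) = \int_\Omega \nabla_x K(x,y,\tau)\bigl[u(y,t-\tau) - u(x,t-\tau)\bigr]dy + \int_{t-\tau}^t \nabla_x\bigl[S(t-s)\phi(\cdot,s)\bigr](x)\,ds.
\end{equation*}
The Hölder hypothesis \eqref{AA1} together with the Gaussian change of variables $z=(y-x)/\sqrt{\tau}$ bound the first integral by $C_1 H \tau^{(\gamma-1)/2}$; the $L^\infty$ hypothesis \eqref{AA2} and the elementary estimate $\|\nabla_x K(x,\cdot,r)\|_{L^1_y}\leq Cr^{-1/2}$ bound the second by $C_2 F \tau^{1/2}$. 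Hence
\begin{equation*}
|\nabla u(x,t)| \leq C_1 H \tau^{(\gamma-1)/2} + C_2 F \tau^{1/2}\qquad \text{for every }\tau \in (0,t].
\end{equation*}

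Minimising in $\tau$, the two powers balance at $\tau_* \sim (H/F)^{2/(2-\gamma)}$ and produce exactly the claimed estimate $BH^{1/(2-\gamma)}F^{(1-\gamma)/(2-\gamma)}$. In the short-time regime $\tau_* > t$, I take instead $\tau = t$, so that $u(\cdot,t-\tau) = u_0$ and the first term is $\nabla S(t) u_0(x)$; applying the same cancellation and estimating the difference by the stronger $|u_0(y)-u_0(x)| \leq \|u_0\|_{C^1(\Omega)}|y-x|$ yields $|\nabla S(t)u_0(x)| \leq C\|u_0\|_{C^1(\Omega)}$, while $C_2 F t^{1/2} \leq C_2 F \tau_*^{1/2}$ still fits into the target bound. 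Combining the two regimes gives the lemma. The main obstacle is not the calculation but rather justifying the sharp Gaussian gradient bound and the cancellation identity for the Neumann heat kernel \emph{up to the boundary} of a bounded smooth domain; these are classical (cf. Davies, Li--Yau) but genuinely use the $C^\infty$ smoothness of $\partial\Omega$ built into \eqref{A1}. Once they are invoked, the remainder is elementary optimisation.
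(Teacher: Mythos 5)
Your proof is correct and follows essentially the same strategy as the paper's: both rest on the Gaussian gradient bound for the Neumann heat kernel up to the boundary, the cancellation identity $\int_\Omega \nabla_x G(x-y,t)\,dy = 0$, the H\"older hypothesis to control the $u$-difference term, and an optimisation over a free interpolation parameter. The only difference is cosmetic: the paper damps the full Duhamel integral with a weight $e^{-k(t-s)}$ and optimises over $k$ (which also makes the $C\|u_0\|_{C^1(\Omega)}$ term appear uniformly rather than only in your short-time regime $\tau_*>t$), whereas you truncate the Duhamel window at length $\tau$ and optimise over $\tau$; since $k$ plays the role of $1/\tau$, the two computations produce identical exponents.
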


At the first glance, Lemma~\ref{lem:main} seems little helpful for the problem of global classical solutions to systems \eqref{eq1}
under Assumptions \eqref{A1}--\eqref{A5}. In fact, a crucial difficulty in showing 
global classical solutions  to \eqref{eq1} is the failure of comparison principles (except in some special cases) and thus the lack of sufficiently strong 
a-priori estimates. However, maximum principle and thus $L^{\infty}$ a-priori estimates are well known for the scalar equation \eqref{heat-eq}. 
So how could Lemma~\ref{lem:main} contribute to showing global classical solutions to systems \eqref{eq1}? 
\medskip

In this work, we consider first the case of "equality" and $K_1 = 0$ in \eqref{A3}, i.e.
\begin{equation}\label{A3eq}\tag{A3eq}
\sum_{i=1}^Nf_i(u)  = K_0.
\end{equation}
Under this assumption, we follow closely ideas of Kanel  \cite{Kan90}, who considered the case $\O = \R^n$, in which Lemma \ref{lem:main} plays an essential role. We extend his approach to bounded domain with smooth boundary. In particular, our approach requires careful H\"older regularity estimates up to the boundary for some parabolic equation of non-divergence form in bounded domains (see Lemma \ref{lem:uhat}), since the De Giorgi's technique usually gives only local H\"older continuity. As another refinement to \cite{Kan90}, our proof keeps track of the involved constants and shows that the $L^\infty$-norm of the solution grows at most polynomially in time. 

Such algebraic growth estimates are very useful in interpolation arguments and allow us, for instance, to prove exponential decay to zero of solutions in case $K_0 = 0$ and $K_1 < 0$. Another example for useful interpolation arguments with algebraic a-priori estimates are systems, which feature exponential $L^1$-convergence to equilibrium. This is true, for instance for complex balanced chemical reaction networks, see e.g. \cite{FT17,FT17a,DFT17}.
Indeed, for such systems, interpolation of slowly (i.e. algebraically) growing a-priori $L^\infty$-estimates with exponential $L^1$-convergence to equilibrium yields equilibration estimates in any $L^p$-norm for $p\in [1,\infty)$ as well as uniform-in-time bounds, see e.g. \cite{DF08,FLT}.
\medskip

After proving Theorem~\ref{thm:main} in the case of 
\eqref{A3eq}, the general case of \eqref{A3} follows from 
suitable transformations, see {\bf \textit{Step 2}} below.


\medskip
In the following, we provide a detailed outline of the 
proof of the main Theorem~\ref{thm:main}.
In particular, we sketch how to manipulate 
system \eqref{eq1} under Assumptions 
\eqref{A1}--\eqref{A5} such that Lemma~\ref{lem:main} 
can be applied.

\subsection{Outline of the proof of Theorem~\ref{thm:main}}
The proof 
is divided in two main steps:

\noindent{\bf \textit{Step 1: The case of condition \eqref{A3eq}.}} We consider first the case in which the nonlinearities satisfy \eqref{A3eq}.
\begin{enumerate}[topsep=5pt, leftmargin=7mm]

\item The existence of local in time, classical, non-negative solutions $u= (u_1,\ldots, u_N)$ on some time interval $[0,T)$ follows from classical references thanks to Assumptions \eqref{A1}, \eqref{A2} and \eqref{A4}, see e.g. \cite{pazy,Rot84,Pao} and Proposition \ref{thm:local} below. The same references also 
imply global existence of classical solutions provided 
that for all $T>0$
\begin{equation*}
\sup_{t\in (0,T)}\|u_i(\cdot, t)\|_{L^{\infty}(\O)} < +\infty, \quad \text{ for all } \quad i = 1,\ldots, N,
\end{equation*}
which is our aim in the following. 

\item For a constant $d> \max_{i=1,\ldots,N}\{d_i\}$, let $v_i$ be the solution to
\begin{equation}\label{vi}
			\begin{cases}
				\pa_t v_i - d\Delta v_i = u_i, &\quad(x,t)\in Q_T,\\
				\nx v_i \cdot \nu = 0, &\quad(x,t)\in \pa\O\times (0,T),\\
				v_i(x,0) = 0, &\quad x\in\O.
			\end{cases}
\end{equation}
Since $0\leq u_i \in C^\infty((0,T)\times \overline{\O})\cap C([0,T);L^p(\Omega))$ for some $p>n$, the existence, uniqueness and nonnegativity of $v_i$ follows from classical results.
Note that the solutions $v_i$ can be seen as mollified 
versions of the $u_i$. 
Moreover, 
by taking the sum over all $i=1,\ldots,N$ of \eqref{vi}, 
we calculate
\begin{equation}\label{zvd}
\underbrace{\sum_{i=1}^{N}(\pa_tv_i - d_i\Delta v_i)}_{\displaystyle=:z} - \Delta \underbrace{\sum_{i=1}^{N}(d-d_i) v_i}_{\displaystyle=: v_d} = \sum_{i=1}^{N}u_i. 
\end{equation}
\item Concerning the function $z$, direct calculations (see Lemma \ref{lem:boundz}) using assumption \eqref{A3eq} and \eqref{vi} show that $z$ satisfies the linear equation 
\begin{equation}\label{sysz}
\begin{cases}
\pa_t z - d\Delta z = K_0, &\qquad (x,t)\in Q_T,\\
\nx z \cdot \nu = 0, &\qquad (x,t)\in \pa\O\times (0,T),\\
z(x,0) = \sum_{i=1}^{N}u_{i0}(x), &\qquad  x\in\O,
\end{cases}
\end{equation}
and therefore the following comparison principle estimate holds
\begin{equation}\label{boundz}
\sup_{Q_T}|z| \leq \sum_{i=1}^{N}\|u_{i0}\|_{L^\infty(\Omega)} + K_0T.
\end{equation}
\item Secondly, it follows from the definition of $v_d$ in \eqref{zvd} that (see Lemma \ref{lem:vd}) 
\begin{equation}\label{vd}
v_d(x,t) = d \underbrace{\int_0^tz(x,s)ds}_{\displaystyle =:\wh{z}} - \underbrace{\int_0^t\sum_{i=1}^{N}d_iu_i(x,s)ds}_{\displaystyle =:\wh{u}},
\end{equation}
where the function $\wh{z}$ solves the following heat equation (see Lemma \ref{lem:zhat})
\begin{equation*}
\begin{cases}
\pa_t\wh{z} - d\Delta\wh{z} = z(0) + K_0t, \\
\nx \wh{z} \cdot \nu = 0, \qquad \wh{z}(0) = 0
\end{cases}
\end{equation*}
and is therefore H\"older continuous. 
\item Next follows the crucial observation of the H\"older continuity of 
the function $\wh{u}$ defined in \eqref{vd}, which solves (see Lemma \ref{lem:uhat_eq}) 
\begin{equation*}
\begin{cases}
b\,\pa_t\wh{u} - \Delta \wh{u} = \sum_{i=1}^{N}u_{i0} + K_0t, \\\nx \wh{u}\cdot \nu = 0, \qquad \wh{u}(0) = 0,
\end{cases}
\end{equation*}
and where the coefficient in front of the time derivative
\begin{equation*}
\qquad b(x,t) := \frac{\sum_{i=1}^{N}u_i(x,t)}{\sum_{i=1}^{N}d_iu_i(x,t)} 
\end{equation*}
is uniformly bounded due to its definition and the non-negativity of the $u_i$:
\begin{equation*}
\frac{1}{\max \{d_i\}} \leq b(x,t) \leq \frac{1}{\min \{d_i\}},
\qquad \text{for all }\quad(x,t)\in Q_T.
\end{equation*}
The boundedness of $b$ permits us to apply 
a classical result to prove H\"older continuity of $\wh{u}$,
see Lemma~\ref{lem:uhat} and its proof in the Appendix.
Therefore, by \eqref{vd} it follows (see Lemma \ref{lem:vdreg}) that $v_d$ is H\"older continuous with an exponent $\delta \in (0,1)$.

\item The two final steps prove boundedness 
of 
\begin{equation}\label{U}
|U| := \sup_{Q_T}\max_{i=1,\ldots,N}|u_i|
\end{equation}
by estimating the right hand side of \eqref{zvd} in terms of regularity estimates of the left hand side terms. 
This strategy requires, in particular, to control  
$\sup_{Q_T}|\Delta v_d|$ in terms of $|U|$.
First, we observe 
that $v_d$ satisfies (see Lemma \ref{lem:Lapvd})
\begin{equation}\label{vdeq}
\begin{cases}
\pa_tv_d - d\Delta v_d = \sum_{i=1}^{N}(d-d_i)u_i=: u_d,\\
\nx v_d \cdot \nu = 0,  \qquad v_d(0) = 0,
\end{cases}
\end{equation}
which will allow us to apply Lemma \ref{lem:main}.
At this point, when contemplating why the presented approach is able to succeed at the end of the day, 
we recall that the lack of maximum principle estimates 
for \eqref{eq1} stems from the different diffusion coefficients 
$d_i$. Note then, that $u_d$ constitutes the difference between a $d$-weighted sum (or equally average) of the 
$u_i$ and a $d_i$-weighted sum (or average). 
Accordingly, the function $v_d$ has the same interpretation 
in terms of the $v_i$, i.e. mollified versions of the $u_i$.
The key observation is that the function $v_d$
satisfies with \eqref{vdeq} a nice parabolic equation (rather than a parabolic system). 
Consequentially, the following estimates on $u_d$ and $v_d$ show that 
both those difference between $d$-weighted and $d_i$-weighted sums satisfy the required regularity estimates in terms of $|U|$ (and without a closeness condition between $d$ and the $d_i$ as exploited in \cite{CDF14}).

More precisely, Lemma \ref{lem:main} implies for $v_d$ as a $\delta$-H\"older continuous solution of \eqref{vdeq} that
(see Lemma~\ref{lem:gradvd})
\begin{equation}\label{boundvd}
|\nx v_d| \leq C_T\sup_{Q_T}|u_d|^{\frac{1-\delta}{2-\delta}} \leq C_T|U|^{\frac{1-\delta}{2-\delta}},
\end{equation}
where the constant $C_T$ depends at most polynomially on $T$ for any existence interval $[0,T)$.

Analog, from the equations $\pa_tu_i - d_i\Delta u_i = f_i(u)$ and assumption \eqref{A5}, Lemma \ref{lem:main} with $\gamma = 0$ yields (see Lemma~\ref{lem:gradud})
\begin{equation}\label{boundud}
\begin{aligned}
|\nx u_d| &\leq C\sum_{i=1}^{N}|\nx u_i| \leq C_T(1+|U|^{1/2}\sup_{Q_T}\max_{i=1,\ldots,N}|f_i(u)|^{1/2})\\
&\leq C_T\left(1+ |U|^{1/2}\left(1+|U|^{1+\frac \varepsilon 2}\right)\right)
\leq C_T\left(1+|U|^{\frac{3+\varepsilon}{2}} \right).
\end{aligned}
\end{equation}
{\color{black}
To estimate $\Delta v_d$, we use a second order estimate of the heat semigroup $e^{t\Delta}$ to obtain\footnote{This idea was used in \cite{Sou18}.} (see Lemma \ref{lem:Lapvd})
\begin{equation}\label{boundDvd}
\begin{aligned}
\sup_{Q_T}|\Delta v_d| &\leq C_T\sup_{Q_T}(|v_d|+|\nx v_d|)^{1/2}\sup_{Q_T}(|u_d| + |\nx u_d|)^{1/2}
\\
&\leq C_T\left(1+|U|^{\frac{1-\delta}{2-\delta}}\right)^{1/2}\left(1+|U|^{\frac{3+\varepsilon}{2}}\right)^{1/2}\\
&\leq C_T\left(1+|U|^{\frac 34 + \frac{\varepsilon}{4} + \frac{1-\delta}{2(2-\delta)} }\right).
\end{aligned}
\end{equation}
Note that in the case $\Omega = \mathbb R^d$, one can obtain \eqref{boundDvd} by differentiating \eqref{vdeq} and apply directly Lemma \ref{lem:main} to the equation of $\pa_{x_j}v_d$ with $\gamma = 0$.
}
\item Finally, from \eqref{zvd}, \eqref{boundz} and \eqref{boundDvd}, it follows
\begin{equation*}
|U| \leq C_T\left(1+|U|^{\frac 34 + \frac{\varepsilon}{4} + \frac{1-\delta}{2(2-\delta)} }\right)
\end{equation*}
and therefore, since $\frac 34 + \frac{\varepsilon}{4} + \frac{1-\delta}{2(2-\delta)}<1$ for $\varepsilon$ sufficiently small, 
\begin{equation*}
	|U| \leq C_T, \qquad \text{for all}\quad (x,t)\in Q_T,
\end{equation*}
which implies that the solutions $u_i$ can be extended 
globally as bounded and therefore smooth solutions, 
see e.g. \cite{pazy,Rot84}. 
\end{enumerate}	

\medskip

One important observation here is that the results of \textit{\textbf{Step 1}} are still true, except for the fact that the $L^\infty$-norm in this case might grow faster than polynomial, if we replace the constant $K_0$ by a function $K_0(t)$ which is continuous on $[0,\infty)$. This fact will be helpful in \textit{\textbf{Step 2.}}

	\medskip

\noindent{\bf \textit{Step 2: The case of condition \eqref{A3}.}} The general assumption \eqref{A3} can be transformed into condition \eqref{A3eq} (where $K_0$ is replace by a continuous function $K_0(t)$) by the help of 
rescaling and the addition of an appropriate equation to the system:
\begin{enumerate}
\item By defining
\begin{equation*}
	w_i(x,t) = e^{-K_1t}u_i(x,t)  \quad \text{ or equivalently } \quad u_i(x,t) = e^{K_1 t}w_i(x,t)
\end{equation*}
we obtain (see Section \ref{sec:control}) the following system for $w = (w_1, \ldots, w_N)$
\begin{equation}\label{eq_w}
	\begin{cases}
		\pa_t w_i - d_i\Delta w_i = g_i(w), &(x,t)\in Q_T \quad \text{ for } \quad i = 1,\ldots, N,\\
		\nx w_i \cdot \nu = 0, &(x,t)\in \pa\Omega\times (0,T),\\
		w_i(x,0) = u_{i0}(x), &x\in\Omega
	\end{cases}
\end{equation}
where the nonlinearities $g_i(w) = e^{-K_1t}(f_i(u(w)) - K_1e^{K_1t}w_i)$.
It is obvious that nonlinearities $g_i(w)$ satisfy the assumptions \eqref{A4}--\eqref{A5}, while \eqref{A3} is changed to
\begin{equation}\label{sum_g}
	\begin{aligned}
	\sum_{i=1}^N g_i(w) \leq K_0e^{-K_1 t}.
	\end{aligned}
\end{equation}

\item To obtain \eqref{A3eq} from \eqref{sum_g}, we introduce an $(N+1)$-th equation for $w_{N+1}$ as
\begin{equation*}\qquad \quad
	\partial_t w_{N+1} - \Delta w_{N+1} = g_{N+1}(w):= K_0e^{-K_1 t} - \sum_{i=1}^{N}g_i(w),
\end{equation*}
together with boundary condition $\nx w_{N+1}\cdot \nu = 0$ and initial data $w_{N+1}(x,0) = 0$. It is immediate that $g_{N+1}(w)$ satisfies the conditions \eqref{A4} and \eqref{A5}. We then obtain a new, enlarged system for $\wt{w} = (w_1, \ldots, w_{N+1})$
\begin{equation}\label{eq_w_new}
	\begin{cases}
		\pa_t w_i - d_i\Delta w_i = g_i(\wt w):= g_i(w), &(x,t)\in Q_T \quad \text{ for } \quad i = 1,\ldots, N+1,\\
		\nx w_i \cdot \nu = 0, &(x,t)\in \pa\Omega\times (0,T), \quad i=1,\ldots, N+1,\\
		w_i(x,0) = u_{i0}(x), &x\in\Omega, \quad i=1,\ldots, N,\\
		w_{N+1}(x,0) = 0, &x\in\Omega
	\end{cases}
\end{equation}
in which the nonlinearities now satisfy
\begin{equation*}
	\sum_{i=1}^{N+1}g_i(\wt w) = K_0e^{-K_1 t}
\end{equation*}
and the problem is thus reduced to {\bf \textit{Step 1.}}, taking into account that last remark therein that $K_0$ could be a continuous function of $t$. Note that if $K_0 = 0$ and $K_1 < 0$ then $\wt w$ has at most polynomial growth in time. Thus, from $u_i(x,t) = e^{K_1 t}w_i(x,t)$ we get for $K_1 < 0$ that $u_i$ decays exponentially to zero.
\end{enumerate}

\medskip

A direct \textbf{application} of Theorem~\ref{thm:main} allows to prove global classical solutions in all space dimensions 
to nonlinear, skew-symmetric Lotka-Volterra systems with diffusion of the form
	\begin{equation}\label{LV}
		\begin{cases}
		\pa_t {u}_i -  d_i\Delta {u}_i = \left(-\tau_i + \sum_{j=1}^{N}a_{ij}{u}_j\right){u}_i =: f_i(u), &(x,t)\in Q_T,\\
		\nx {u}_i \cdot \nu = 0, &(x,t)\in \pa\Omega\times (0,T),\\
		u_i(x,0) = u_{i0}(x), &x\in\O,
		\end{cases}
	\end{equation}
	where $A = (a_{ij})\in \R^{N\times N}$ is skew-symmetric, i.e. $A^{\top} + A = 0$ and $\tau = (\tau_1, \ldots, \tau_N)\in \R^N$. Note that such Lotka-Volterra systems do not satisfy an entropy structure as required by \cite{CGV,Sou18}. We will show that \eqref{LV} has a unique global classical solution in all dimensions, and moreover, if $\tau_i > 0$ for all $i=1,\ldots, N$, then the solution decays to zero exponentially (see Section~\ref{sec:appl}). 
	
	\medskip
\noindent\textbf{The organisation of the paper} is as follows: In Section~\ref{sec:lem}, we prove the crucial Lemma~\ref{lem:main}. In Section \ref{sec:thm}, we first present the proof of Theorem~\ref{thm:main} with condition \eqref{A3eq}, while the proof of Theorem \ref{thm:main} for general condition \eqref{A3} is presented in Section~\ref{sec:control}. Section~\ref{sec:appl} is devoted to some applications of the main results. Finally, a technical proof of H\"older continuity for $\wh{u}$ is presented in the Appendix~\ref{appendix}.
	
	\medskip
\noindent\textbf{Notations:}
\begin{itemize}[topsep=5pt, leftmargin=6mm]
		\item $\R_+ = [0,\infty)$, $\R_+^N = [0,\infty)^N$.
		\item The usual norm of $L^p(\O)$ is denoted by $\|\cdot\|_{L^p(\O)}$ for any $1\leq p \leq \infty$.
		\item For $T>0$, we denote by $Q_T = \O\times (0,T)$ and for any $1\leq p < +\infty$
		\[
			\|f\|_{L^p(Q_T)} = \left[\int_{Q_T}|f|^pdxdt\right]^{1/p}, \quad \|f\|_{L^{\infty}(Q_T)} = {\mathrm{ess}\sup}_{Q_T}|f|.
		\]
		\item The generic constant $C_i$, $i=1,\ldots, 12$ depends only on the data $\O$, $N$, $n$, $d_i$, $K$, $K_0$, $K_1$, $\varepsilon$ and $\|u_{i0}\|_{L^\infty(\O)}$. In particular, $C_i$ {\it does not depend on } $T>0$.
	\end{itemize}
	
	\section{Proof of key Lemma \ref{lem:main}}\label{sec:lem}
	In case $\O$ is a bounded domain, we denote by $G(x,t)$ the Green function of the heat equation $\pa_t u - d\Delta u = 0$ subject to a homogeneous Neumann boundary condition. When $\O = \R^n$, let $G(x,t)$ be the fundamental solution, i.e. $G(x,t) = \frac{1}{(4d\pi t)^{n/2}}e^{-{|x|^2}/{4dt}}$. Let $k>0$ be a constant which 
serves as a kind of interpolation parameter, to be specified later. We rewrite \eqref{heat-eq} as
	\[
		\pa_tu - d\Delta u + ku = \phi(x,t) + ku,
	\]
	and use the representation formula to have
	\begin{equation}\label{rep}
		u(x,t) = e^{-kt}\wt{u}(x,t) + \int_0^te^{-k(t-s)}\int_{\O}G(x-y,t-s)[\phi(y,s) + ku(y,s)]dyds
	\end{equation}
	where $\wt{u}(x,t)$ is the solution to 
\begin{equation*}
\begin{cases}
\pa_t\wt{u} - d\Delta \wt{u} = 0, \\
\nx \wt{u}\cdot \nu = 0, \qquad \wt{u}(x,0) = u_0(x).
\end{cases}
\end{equation*} 
Using the property of heat semigroup, see e.g. \cite[Eq. (2.39)]{Mor83} we have
\begin{equation}\label{utilde}
		\sup_{t>0}\sup_{x\in\O}|\nx \wt{u}(x,t)| \leq C\|u_0\|_{C^1(\Omega)}.
\end{equation}
By differentiating \eqref{rep} in the spatial variables and using $\int_{\O}\nx G(x-y,t-s)u(x,s)dy = 0$ thanks to the Neumann boundary condition, we have
	\begin{equation}\label{unab}
		\nx u(x,t) = e^{-kt}\nx \wt{u}(x,t) + \int_0^te^{-k(t-s)}\int_{\O}\nx G(x-y,t-s)[\phi(y,s) + k(u(y,s) - u(x,s)]dyds.
	\end{equation}
	It follows from \eqref{unab} and \eqref{utilde} that
\begin{align}
|\nx u(x,t)| &\leq C\|u_0\|_{C^1(\Omega)} \nonumber \\
&\quad + \int_0^te^{-k(t-s)}\int_{\O}|\nx  G(x-y,t-s)|[|\phi(y,s)| + k|u(y,s) - u(x,s)|]dyds\nonumber \\
		&\leq C\|u_0\|_{C^1(\Omega)} + F\int_0^te^{-k(t-s)}\int_{\O}|\nx  G(x-y,t-s)|dyds\nonumber \\
		& \quad + kH\int_0^te^{-k(t-s)}\int_{\O}|\nx  G(x-y,t-s)||x-y|^{\gamma}dyds\label{unab1}
\end{align}
where we have used the assumptions \eqref{AA1} and \eqref{AA2} in the last step.


	We now distinguish the two cases, when $\O$ is a bounded domain and $\O = \R^n$.
	\begin{description}[leftmargin=6 mm]
		\item[Case 1] Let $\O$ be a bounded domain with smooth boundary.
		In this case use the following point-wise gradient estimate on the Green function (see e.g. \cite{LY86,Wan97} or \cite{Dun04, Mor83}),
		\begin{equation*}
		|\nx G(x-y, t-s)| \leq c_n(t-s)^{-\frac{n+1}{2}}e^{-\kappa_n\frac{|x-y|^2}{t-s}}
		\end{equation*}
		where positive constants $c_n$ and $\kappa_n$ depend only on the dimension $n$, the domain $\Omega$ and the diffusion coefficients $d$. Using this bound, we can estimate further
		\begin{equation}\label{inter1}
		\begin{aligned}
		|\nx u(x,t)| \leq C\|u_0\|_{C^1(\Omega)} &+ c_nF\int_0^t\frac{e^{-k(t-s)}}{(t-s)^{\frac{n+1}{2}}}\int_{\O}e^{-\kappa_n\frac{|x-y|^2}{t-s}}dyds\\
		&+ c_nkH\int_0^t\frac{e^{-k(t-s)}}{(t-s)^{\frac{n+1}{2}}}\int_{\O}|x - y|^{\gamma}e^{-\kappa_n\frac{|x-y|^2}{t-s}}dyds.
		\end{aligned}
		\end{equation}
		We denote by $(\mathrm{I})$ and $(\mathrm{II})$ the second and last terms on the right hand side of \eqref{inter1}. 		
		By the change of variables $z = \sqrt{\frac{\kappa_n}{t-s}}(y-x)$, we have for any $\delta \geq 0$
			\begin{equation}\label{identity}
				\begin{aligned}
		\quad		\int_{\Omega}|x-y|^{\delta}e^{-\kappa_n\frac{|x - y|^2}{t-s}}dy &= \left(\frac{t-s}{\kappa_n}\right)^{\frac{n+\delta}{2}}\int_{\sqrt{\frac{\kappa_n}{t-s}}(\Omega - x)}|z|^{\delta}e^{-|z|^2}dz
				\le \left(\frac{t-s}{\kappa_n}\right)^{\frac{n+\delta}{2}}\int_{\R^n}|z|^{\delta}e^{-|z|^2}dz\\
				&\leq \omega_{n-1}\Gamma\left(\frac{n+\delta}{2}\right)\left(\frac{t-s}{\kappa_n}\right)^{\frac{n+\delta}{2}},
			\end{aligned}
		\end{equation}
		where $\omega_{n-1}$ is the surface area of the $n-1$-dimensional unit sphere and $\Gamma$ is the Gamma function. Therefore
		\begin{equation}\label{IIIest}
		(\mathrm{I}) \leq c_n\kappa_n^{-n/2}F\int_0^t\frac{e^{-k(t-s)}}{\sqrt{t-s}}ds\int_{\R^n}e^{-|z|^2}dz \leq c_n\kappa_n^{-n/2}F\,\Gamma(n/2)\frac{\sqrt \pi}{\sqrt{k}} =: \frac{B_1F}{\sqrt{k}}
		\end{equation}
		where 
		\[
		B_1 = c_n\kappa^{-n/2}\Gamma(n/2)\sqrt{\pi}.\]
		For $(\mathrm{II})$, we have
		\begin{equation}\label{IVest}
		(\mathrm{II}) \leq c_n\kappa_n^{-(n+\gamma)/2}kH\int_0^t\frac{e^{-k(t-s)}}{(t-s)^{\frac{1-\gamma}{2}}}ds\int_{\R^n}|z|^{\gamma}e^{-|z|^2}dz \leq B_2kH\int_0^t\frac{e^{-k(t-s)}}{(t-s)^{\frac{1-\gamma}{2}}}ds
		\end{equation}
		where
		\[
		B_2 = c_n\kappa_n^{-(n+\gamma)/2}\Gamma\Bigl(\frac{n+\gamma+1}{2}\Bigr).
		\]
		The last term on the right hand side of \eqref{IVest} is estimated using the change of variable $\tau = \sqrt{k(t-s)}$,
				\begin{equation}\label{abc}
					(\mathrm{II}) \leq B_2H(\sqrt{k})^{1-\gamma}\int_0^{\sqrt{kt}}e^{-\tau^2}\tau^{\gamma}d\tau \leq B_3H(\sqrt{k})^{1-\gamma}
				\end{equation}
		with
		\[
			B_3 = B_2\Gamma\Bigl(\frac{\gamma+1}{2}\Bigr).
		\]
		From \eqref{inter1}, \eqref{IIIest} and \eqref{abc}, we have
				\begin{equation*}
					|\nx u(x,t)| \leq C\|u_0\|_{C^1(\Omega)} + \frac{B_1F}{\sqrt{k}} + B_3H(\sqrt{k})^{1-\gamma}.
				\end{equation*}
				By choosing
				\begin{equation*}
					\sqrt{k} = \left[\frac{B_1F}{B_3H(1-\gamma)} \right]^{\frac{1}{2-\gamma}},
				\end{equation*}
				we obtain the desired estimate
				\begin{equation*}
					|\nx u(x,t)| \leq C\|u_0\|_{C^1(\Omega)} +  BF^{\frac{1-\gamma}{2-\gamma}}H^{\frac{1}{2-\gamma}}
				\end{equation*}
				with
				\[
					B = \left[(1-\gamma)^{\frac{1}{2-\gamma}} + (1-\gamma)^{\frac{\gamma-1}{2-\gamma}} \right]B_1^{\frac{1-\gamma}{2-\gamma}}B_3^{\frac{1}{2-\gamma}}.
				\]
		
		\medskip
		
		\item[Case 2] $\O = \R^n$. In this case, we use the explicit representation of the fundamental solution to obtain
		\begin{equation*}
			\nx G(x-y, t-s) = \frac{1}{(4\pi d(t-s))^{n/2}}\frac{y - x}{2d(t-s)}e^{-\frac{|x-y|^2}{4d(t-s)}}
		\end{equation*}
		and consequently get from \eqref{unab1} that 
		\begin{equation}\label{inter}
			\begin{aligned}			
			|\nx u(x,t)| &\leq C\|u_0\|_{C^1(\mathbb R^n)}+ F\frac{1}{2d(4\pi d)^{n/2}}\int_0^t\frac{e^{-k(t-s)}}{(t-s)^{n/2+1}}\int_{\R^n}|x-y|e^{-\frac{|x-y|^2}{4d(t-s)}}dyds\\
			&\quad + kH\frac{1}{2d(4\pi d)^{n/2}}\int_0^t\frac{e^{-k(t-s)}}{(t-s)^{n/2+1}}\int_{\R^n}|x-y|^{1+\gamma}e^{-\frac{|x-y|^2}{4d(t-s)}}dyds.
			\end{aligned}			
		\end{equation}
		Denote by $(\mathrm{III})$ and $(\mathrm{IV})$ the second and the last term on the right hand side of \eqref{inter}, respectively. We estimate these two terms similarly as in {\bf Case 1} to have
		\begin{equation*}
			(\mathrm{III}) \leq \frac{B_4F}{\sqrt{k}}, \qquad \text{where} \quad B_4 = \frac{\omega_{n-1}}{\pi^{(n-1)/2}\sqrt{d}}\,\Gamma\Bigl(\frac{n+1}{2}\Bigr)
		\end{equation*}
		and
		\begin{equation*}
			(\mathrm{IV}) \leq B_5H(\sqrt{k})^{1-\gamma},\qquad \text{where} \quad B_5 =  \frac{\omega_{n-1}}{\pi^{n/2}}\,2^{\gamma-1} d^{\frac{\gamma - 1}{2}}\,\Gamma\Bigl(\frac{1+\gamma}{2}\Bigr)\Gamma\Bigl(\frac{n+1+\gamma}{2} \Bigr).
		\end{equation*} 
		Hence
		\begin{equation*}
			|\nx u(x,t)| \leq C\|u_0\|_{C^1(\mathbb R^n)} + \frac{B_4F}{\sqrt{k}} + B_5H(\sqrt{k})^{1-\gamma},
		\end{equation*}
		and therefore
		\begin{equation*}
			|\nx u(x,t)| \leq C\|u_0\|_{C^1(\mathbb R^n)} + BF^{\frac{1-\gamma}{2-\gamma}}H^{\frac{1}{2-\gamma}}
		\end{equation*}
		with
		\begin{equation*}
			B = \left[(1-\gamma)^{\frac{1}{2-\gamma}} + (1-\gamma)^{\frac{\gamma-1}{2-\gamma}} \right]B_4^{\frac{1-\gamma}{2-\gamma}}B_5^{\frac{1}{2-\gamma}}.
		\end{equation*}
	\end{description}
	\section{Proof of Theorem \ref{thm:main} with condition \eqref{A3eq}}\label{sec:thm}
	\begin{proposition}[Local existence]\label{thm:local}\hfill\\
		Assume \eqref{A1}, \eqref{A2} and \eqref{A4}. Then, there exists an interval $[0,T)$ (which can be chosen maximal) and a corresponding unique nonnegative classical solution $u$ to \eqref{eq1} on $(0,T)$. Moreover, in order to extend the solution globally, it is sufficient to prove for all $T>0$
\begin{equation}\label{bucri}
\lim_{t\uparrow T}\|u_i(t)\|_{L^{\infty}(\O)} < +\infty \qquad \forall i=1,\ldots, N, \qquad \Rightarrow \qquad T = +\infty.
\end{equation}
\end{proposition}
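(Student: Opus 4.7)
The plan is to construct the local mild solution via a Banach fixed point argument, promote it to a nonnegative classical solution using quasi-positivity together with parabolic smoothing, and deduce the blow-up criterion by restarting the local construction at a hypothetical finite maximal time. All of this is essentially classical and can be organised along the lines of \cite{pazy,Rot84,Pao}.

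First, fix $p>n$ and recast \eqref{eq1} as the Duhamel system
\begin{equation*}
u_i(t) = e^{t d_i \Delta_{\mathcal N}} u_{i0} + \int_0^t e^{(t-s)d_i\Delta_{\mathcal N}} f_i(u(s))\,ds,\qquad i=1,\ldots,N,
\end{equation*}
where $\Delta_{\mathcal N}$ denotes the Neumann Laplacian on $\Omega$, which generates an analytic semigroup on $L^p(\Omega)$. Since \eqref{A2} gives $u_{i0}\in L^{\infty}(\Omega)\hookrightarrow L^p(\Omega)$ and \eqref{A4} gives local Lipschitz continuity of $f$, a standard Picard--Banach contraction on a small closed ball of $C([0,T_0];L^p(\Omega)^N)$ around the initial datum yields, for $T_0$ sufficiently small in terms of $\|u_0\|_{L^\infty}$ and the local Lipschitz constant of $f$ on a neighbourhood, a unique local mild solution. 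Patching these solutions and invoking local uniqueness, one selects a maximal existence interval $[0,T)$.

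Second, to guarantee nonnegativity I would work instead with the quasi-positive modification $\wt f_i(u):=f_i(u_1^+,\ldots,u_{i-1}^+,u_i,u_{i+1}^+,\ldots,u_N^+)$, further truncated outside a large ball to render it globally Lipschitz, and solve the modified system by the same fixed point. Testing the $i$-th modified equation against $-u_i^-$, integrating by parts, and using that \eqref{A4} gives $\wt f_i(u)\ge -L\,u_i^-$ pointwise on $\{u_i<0\}$ (with $L$ the Lipschitz constant of $\wt f_i$), one obtains
\begin{equation*}
\frac{d}{dt}\sum_{i=1}^{N}\|u_i^-\|_{L^2(\Omega)}^2 + 2\sum_{i=1}^{N}d_i\|\nabla u_i^-\|_{L^2(\Omega)}^2 \le C\sum_{i=1}^{N}\|u_i^-\|_{L^2(\Omega)}^2.
\end{equation*}
Since $u_i^-(\cdot,0)\equiv 0$, Gr\"onwall's inequality forces $u_i^-\equiv 0$, so the modified solution stays in $\R_+^N$. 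Because $\wt f\equiv f$ on $\R_+^N$ below the amplitude cut-off, local uniqueness transfers this nonnegative solution back to a solution of \eqref{eq1} on a possibly shorter interval.

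Third, the $C^\infty$-regularity required by Definition \ref{def.sol} is obtained by the standard parabolic bootstrap: analytic-semigroup smoothing first upgrades the mild solution from $L^p$ to $L^\infty$ on compact subintervals of $(0,T)$, and then alternating $L^q$-maximal regularity with Schauder estimates up to the smooth boundary, which is licit thanks to \eqref{A1} and the compatibility of the zero-flux condition, raises the regularity indefinitely. The blow-up criterion \eqref{bucri} is then a direct consequence of maximality: if $T<\infty$ and $\sup_{t<T}\|u_i(t)\|_{L^\infty(\Omega)}<\infty$ for every $i$, then $u(t)$ admits a limit in $L^\infty(\Omega)^N$ as $t\uparrow T$, which serves as a new initial datum in the local construction and extends the solution past $T$, contradicting maximality. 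The step I expect to require the most care is the nonnegativity argument, since one must verify that the truncated-and-modified $\wt f$ simultaneously preserves the quasi-positive structure \eqref{A4} and coincides with $f$ in a neighbourhood of the actual trajectory, so that uniqueness actually transfers nonnegativity back to \eqref{eq1}; the remaining steps are essentially bookkeeping on standard parabolic semigroup theory.
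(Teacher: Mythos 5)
Your route is the same classical one the paper takes: the paper's proof of Proposition \ref{thm:local} is a two-line appeal to \cite{pazy,Rot84,Ama85} for local existence via the semigroup/fixed-point theory and to \cite{Pao,Pie-Survey} for propagation of nonnegativity from quasi-positivity \eqref{A4}, and your proposal simply writes out the details that those references contain (Duhamel fixed point, quasi-positive modification plus Stampacchia-type testing against $-u_i^-$, bootstrap, restart at the maximal time). One technical point needs fixing: running the contraction on a ball of $C([0,T_0];L^p(\O)^N)$ does not work with merely locally Lipschitz nonlinearities, because the Nemytskii operator $u\mapsto f(u)$ is not Lipschitz (nor even well defined into $L^p$) on $L^p$-balls containing unbounded functions; e.g.\ for quadratic $f$ one only gets $f(u)\in L^{p/2}$. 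Since \eqref{A2} gives $u_{i0}\in L^\infty(\O)$, the standard remedy is to set up the fixed point in $C([0,T_0];L^\infty(\O)^N)$ (or in an $L^p$-ball intersected with a uniform $L^\infty$-bound), after which the local Lipschitz constant of $f$ on the corresponding bounded set controls the contraction; this is exactly how \cite{pazy,Rot84} proceed, and it also makes the existence time depend only on $\|u_0\|_{L^\infty(\O)}$, which is what your final restart argument for \eqref{bucri} actually uses (a uniform lower bound on the existence time from any $t_0<T$ suffices; you do not need $u(t)$ to converge in $L^\infty$ as $t\uparrow T$). With that adjustment the argument is complete and matches the paper's intent.
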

	\begin{proof}
		Since $u_{i0}\in L^\infty(\O)$ and the nonlinearities are locally Lipschitz, the local existence follows from classical results (see e.g. \cite{pazy,Rot84,Ama85}). The quasi-positivity of the nonlinearity \eqref{A4} implies propagation of non-negativity of initial data, e.g. \cite{Pao,Pie-Survey}.
	\end{proof}
\begin{remark}[Weaker blow-up criteria]\hfill\\
The continuation criteria \eqref{bucri} is standard for nonlinear reaction-diffusion systems. For the systems considered here,
it can be weakened. For instance, with assumptions \eqref{A3eq} and \eqref{A4}, by applying improved duality estimates as in \cite{CDF14}, global classical solutions can be shown from the existence of an exponent $p > (1+\varepsilon)(1+n/2)$ such that
\begin{equation*}
	\limsup_{t\uparrow T}\|u_i\|_{L^{p}(Q_t)} < +\infty \quad \text{ for all } \quad i = 1,\ldots, N,\qquad \Rightarrow \qquad T = +\infty.
\end{equation*}
	\end{remark}
\begin{remark}[Smooth initial data]\label{re_smooth}\hfill\\
Thanks to Definition \ref{def.sol} and the smoothing effect of the heat semigroup, it is well-known that the local solutions of Proposition \ref{thm:local} are smooth for positive times, i.e. that for all $0<t_0 <T$ holds $u_i(t_0) \in C^\infty(\overline{\O})$ for all $i=1,\ldots, N$. This allows us to shift the initial time to $t_0$ and therefore consider system \eqref{eq1} with smooth initial data in $C^{\infty}(\overline\O)$. Note that all parabolic compatibility conditions are satisfy at $t_0>0$. 
Consequentially, from now onwards, we weaken (w.l.o.g.) Assumption \eqref{A2} by only considering smooth
initial data $u_{i0}\in C^{\infty}(\overline{\O})$ satisfying a homogeneous Neumann boundary condition for all $i=1,\ldots, N$. Consequently, we have $u_i \in C^{\infty}([0,T)\times \overline{\O})$.
\end{remark}

From now on, because of Remark \ref{re_smooth},  we assume instead of \eqref{A2} that the initial data satisfy 
\begin{enumerate}[label=(A\theenumi'),ref=A\theenumi,leftmargin=11mm]
	\setcounter{enumi}{1}
\item\label{A6} (Smooth Nonnegative Initial Data) For all $i=1,\ldots N$: $0\le u_{i0}\in C^{\infty}(\overline{\O})$.
\end{enumerate}
Moreover, we assume 
\begin{enumerate}[label=(A\theenumi),ref=A\theenumi,leftmargin=10mm]
	\setcounter{enumi}{5}
\item\label{A7} (Diffusion Coefficient $d$ in \eqref{vi}) Throughout this section, we fix the diffusion coefficient $d>\max_{i=1,\ldots,N}\{d_i\}$. Moreover, let $v_i$ be the unique solution to \eqref{vi} on $Q_T$.
\end{enumerate}
	
\begin{lemma}\label{lem:boundz}
		Let $z$ be defined in \eqref{zvd}. Then, $z$ is the solution to \eqref{sysz} and 
		\begin{equation*}
			\sup_{Q_T}|z| \leq M + K_0T:= \sum_{i=1}^{N}\|u_{i0}\|_{L^{\infty}(\O)} + K_0T.
		\end{equation*}
	\end{lemma}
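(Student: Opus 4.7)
The plan is to identify $z$ as a classical solution of an inhomogeneous heat equation with Neumann boundary data and a manageable initial condition, and then read off the desired sup-bound from the scalar maximum principle.

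First, I would verify the interior equation $\partial_t z - d\Delta z = K_0$ by exploiting the commutativity of the constant-coefficient heat operators. Because $v_i$ is smooth for $t>0$ (it is the solution of a heat equation with smooth, nonnegative source $u_i$), the operators $\partial_t - d\Delta$ and $\partial_t - d_i\Delta$ commute on $v_i$, so
\begin{equation*}
(\partial_t - d\Delta)\bigl((\partial_t - d_i\Delta) v_i\bigr) = (\partial_t - d_i\Delta)\bigl((\partial_t - d\Delta) v_i\bigr) = (\partial_t - d_i\Delta) u_i = f_i(u).
\end{equation*}
Summing over $i=1,\ldots,N$ and using assumption \eqref{A3eq}, the right-hand side collapses to $K_0$, giving the stated interior equation for $z$.

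Next, I would check the initial condition and the boundary condition. Since $v_i(\cdot,0)=0$, we have $\Delta v_i(\cdot,0)=0$ and from \eqref{vi} $\partial_t v_i(\cdot,0)=u_{i0}$, so $z(\cdot,0)=\sum_i u_{i0}$. For the boundary condition, the cleanest path is to rewrite
\begin{equation*}
z=\sum_{i=1}^N(\partial_t v_i - d_i\Delta v_i)=\sum_{i=1}^N u_i + \sum_{i=1}^N (d-d_i)\Delta v_i,
\end{equation*}
using $\partial_t v_i = u_i + d\Delta v_i$. The Neumann condition $\nabla u_i\cdot\nu=0$ on $\partial\Omega$ handles the first sum. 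For the second, differentiating the identity $\nabla v_i\cdot\nu=0$ in time yields $\nabla(\partial_t v_i)\cdot\nu=0$, and combining this with $\nabla u_i\cdot\nu=0$ in the equation $d\nabla(\Delta v_i)\cdot\nu = \nabla(\partial_t v_i)\cdot\nu-\nabla u_i\cdot\nu=0$ gives $\nabla(\Delta v_i)\cdot\nu=0$. Hence $\nabla z\cdot\nu=0$ on $\partial\Omega\times(0,T)$, and $z$ solves \eqref{sysz}.

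Finally, to obtain the sup-estimate, I would set $w(x,t):=z(x,t)-K_0 t$, which satisfies the homogeneous heat equation with Neumann boundary data and initial value $\sum_i u_{i0}$. The standard parabolic maximum principle for Neumann problems then yields
\begin{equation*}
\sup_{Q_T}|w|\le\sum_{i=1}^N\|u_{i0}\|_{L^\infty(\Omega)},
\end{equation*}
and the triangle inequality gives $\sup_{Q_T}|z|\le\sum_i\|u_{i0}\|_{L^\infty(\Omega)}+K_0 T$. The only mildly subtle point is the verification of $\nabla(\Delta v_i)\cdot\nu=0$ on the boundary, which I expect to be the one step that requires genuine thought; everything else is bookkeeping with the heat operator.
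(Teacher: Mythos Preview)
Your proposal is correct and follows essentially the same route as the paper: verify that $z$ satisfies $\partial_t z - d\Delta z = K_0$ by commuting the constant-coefficient heat operators, check the Neumann condition via $\nabla(\Delta v_i)\cdot\nu = 0$ (obtained exactly as you describe from the equation for $v_i$ and the time-derivative of $\nabla v_i\cdot\nu=0$), identify the initial datum, and conclude by the scalar comparison principle. The only cosmetic difference is that the paper invokes the comparison principle directly for \eqref{sysz} without writing out the substitution $w=z-K_0 t$, whereas you make that step explicit.
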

\begin{proof}
First, we show that $z$ solves \eqref{sysz} and 
the rest follows from the comparison principle. Indeed, by using definition \eqref{zvd}, we calculate
		\begin{equation*}
			\begin{aligned}
				\pa_tz - d\Delta z &= \pa_t\biggl(\,\sum_{i=1}^N(\pa_t v_i - d_i\Delta v_i)\biggr) - d\Delta \biggl(\, \sum_{i=1}^N(\pa_t v_i - d_i\Delta v_i)\biggr)\\
				&= \sum_{i=1}^{N}\left[\pa_t\left(\pa_t v_i - d\Delta v_i\right) - d_i\Delta \left(\pa_t v_i - d\Delta v_i\right) \right]
= \sum_{i=1}^{N}(\pa_t u_i - d_i\Delta u_i) \qquad (\text{using (\ref{vi})})\\
				&= \sum_{i=1}^{N}f_i(u)= K_0 \qquad (\text{using (\ref{A3eq})}).
			\end{aligned}
		\end{equation*}
Moreover, on $\partial\Omega$, we have $\nx u_i \cdot \nu = \nx v_i \cdot \nu = 0$ and $\pa_tv_i - d\Delta v_i = u_i$ implying $\nx(\Delta v_i)\cdot \nu = 0$. Therefore, the boundary condition $\nx z \cdot \nu = 0$ follows from the definition $z = \sum_{i=1}^{N}(\pa_t v_i - d_i\Delta v_i)$. For initial data, since $v_i(x,0) = 0$, we have
\begin{equation*}
			z(x,0) = \sum_{i=1}^{N}(\pa_tv_i(x,0) - d_i\Delta v_i(x,0)) = \sum_{i=1}^{N}(\pa_tv_i(x,0) - d\Delta v_i(x,0)) = \sum_{i=1}^{N}u_{i}(x,0) = \sum_{i=1}^{N}u_{i0}(x).
\end{equation*}
\end{proof}

\begin{lemma}\label{lem:vd}
The function $v_d$ defined in \eqref{zvd} satisfies
	\begin{equation*}
		v_d = d\wh{z} - \wh{u}
	\end{equation*}
with $\wh{z}$ and $\wh{u}$ as defined in \eqref{vd}.
\end{lemma}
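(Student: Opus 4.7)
The plan is to derive the claimed identity by combining the PDE satisfied by each $v_i$ (which relates $\Delta v_i$ to $\partial_t v_i$ and $u_i$) with the definitions of $z$ and $v_d$, and then integrating the resulting pointwise relation in time.

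First, I would use the equation \eqref{vi} to eliminate the Laplacian: since $\partial_t v_i - d\Delta v_i = u_i$, we have $\Delta v_i = d^{-1}(\partial_t v_i - u_i)$, hence
\begin{equation*}
\partial_t v_i - d_i \Delta v_i \;=\; \partial_t v_i - \frac{d_i}{d}\bigl(\partial_t v_i - u_i\bigr) \;=\; \frac{d-d_i}{d}\,\partial_t v_i \,+\, \frac{d_i}{d}\, u_i.
\end{equation*}
Summing this over $i=1,\ldots,N$ and recalling $z = \sum_i(\partial_t v_i - d_i\Delta v_i)$ and $v_d = \sum_i (d-d_i) v_i$, I obtain the pointwise relation
\begin{equation*}
d\,z(x,t) \;=\; \partial_t v_d(x,t) + \sum_{i=1}^{N} d_i\, u_i(x,t).
\end{equation*}

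Next, I would integrate this identity in time from $0$ to $t$. The initial condition $v_i(x,0)=0$ in \eqref{vi} gives $v_d(x,0)=0$, so $\int_0^t \partial_t v_d(x,s)\,ds = v_d(x,t)$. Using the definitions $\wh{z} = \int_0^t z(x,s)\,ds$ and $\wh{u} = \int_0^t \sum_i d_i u_i(x,s)\,ds$ from \eqref{vd}, this yields
\begin{equation*}
d\,\wh{z}(x,t) \;=\; v_d(x,t) \,+\, \wh{u}(x,t),
\end{equation*}
which is the desired identity after rearrangement. The regularity to justify interchanging the time integral with $\partial_t$ is ensured by the smoothness of the $u_i$ (and hence the $v_i$) on $(0,T)\times\overline{\Omega}$ granted by Remark~\ref{re_smooth}.

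There is no real obstacle: the argument is a direct algebraic manipulation of the defining relations. The only small point to keep track of is the homogeneous initial condition $v_i(x,0)=0$, which is what causes the $v_d(x,0)$ boundary term from the time integration to vanish and produces the clean identity $v_d = d\wh{z} - \wh{u}$ without any additional constant of integration.
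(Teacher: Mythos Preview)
Your proof is correct and follows essentially the same approach as the paper: both arguments use the PDE \eqref{vi} to relate $z$, $v_d$, and $\sum_i d_i u_i$, then integrate in time using $v_i(x,0)=0$. Your version is in fact slightly more streamlined, since by eliminating $\Delta v_i$ pointwise at the outset you obtain the clean relation $d\,z = \partial_t v_d + \sum_i d_i u_i$ directly, whereas the paper first integrates \eqref{vi} in time and carries the term $\Delta\!\int_0^t v_i$ through several intermediate manipulations before arriving at the same identity.
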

\begin{proof}
Integrating \eqref{vi} over $(0,t)$ and using $v_i(x,0) = 0$ yields
\begin{equation*}
	v_i(x,t) - d\Delta \int_0^tv_i(x,s)ds = \int_0^tu_i(x,s)ds.
\end{equation*}
Hence
\begin{align*}
			v_d(x,t) &=\sum_{i=1}^{N}(d-d_i)v_i(x,t)
			= d\int_0^t\sum_{i=1}^N(d-d_i)\Delta v_i(x,s)ds + \int_0^t\sum_{i=1}^{N}(d-d_i)u_i(x,s)ds\\
			&= d\int_0^t\left[d\Delta \sum_{i=1}^Nv_i(x,s) + z(x,s) - \sum_{i=1}^{N}\pa_sv_i(x,s)\right]ds + \int_0^t\sum_{i=1}^N(d-d_i)u_i(x,s)ds\\
			&= d\int_0^t\left[z(x,s) - \sum_{i=1}^Nu_i(x,s)\right]ds + \int_0^t\sum_{i=1}^N(d-d_i)u_i(x,s)ds\\
			&= d\int_0^tz(x,s)ds - \int_0^t\sum_{i=1}^{N}d_iu_i(x,s)ds
			= d\wh{z} - \wh{u}.
\end{align*}
\end{proof}

\begin{lemma}\label{lem:zhat}
Define
		\begin{equation*}
			\wh{z}(x,t):= \int_0^tz(x,s)ds.
		\end{equation*}
		Then, $\wh{z}$ solves
		\begin{equation*}
			\pa_t\wh{z} - d\Delta\wh{z} = \sum_{i=1}^{N}u_{i0} + K_0t \; \text{ in } Q_T, \qquad \nx\wh{z}\cdot\nu = 0\; \text{ on } \pa\Omega\times (0,T), \qquad \wh{z}(x,0) = 0 \; \text{ in } \Omega
		\end{equation*}
		and consequently $\wh{z}$ is H\"older continuous in $Q_T$ with an exponent $\beta \in (0,1)$. 
	\end{lemma}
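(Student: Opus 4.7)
The plan is first to verify by direct computation that $\wh{z}$ satisfies the claimed linear heat equation, and then to invoke standard parabolic regularity to conclude H\"older continuity. Both steps are quite routine; the only subtle point is the interchange of the Laplacian with the time integral, which is justified by the smoothness of $z$ established in Lemma~\ref{lem:boundz}.

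For the equation, I would start from the identity $\pa_t z - d\Delta z = K_0$ in $Q_T$ obtained in Lemma~\ref{lem:boundz}, and integrate it in the time variable from $0$ to $t$. Since $z \in C^\infty((0,T)\times \overline{\O}) \cap C([0,T];L^p(\O))$, both the fundamental theorem of calculus and the interchange of $\Delta$ with $\int_0^t$ apply, giving
\begin{equation*}
z(x,t) - z(x,0) \;=\; d\,\Delta\!\int_0^t z(x,s)\,ds + K_0 t \;=\; d\,\Delta \wh{z}(x,t) + K_0 t.
\end{equation*}
On the other hand, $\pa_t \wh{z}(x,t) = z(x,t)$ by the fundamental theorem of calculus, so
\begin{equation*}
\pa_t \wh{z} - d\,\Delta \wh{z} \;=\; z(x,0) + K_0 t \;=\; \sum_{i=1}^{N} u_{i0}(x) + K_0 t,
\end{equation*}
which is the desired PDE. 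The initial condition $\wh{z}(x,0) = 0$ is immediate from the definition. For the boundary condition, I would integrate the pointwise identity $\nx z(x,s) \cdot \nu = 0$ (valid on $\pa\O$ for every $s \in (0,T)$ by Lemma~\ref{lem:boundz}) over $s \in (0,t)$, again interchanging $\nx$ with $\int_0^t$, to obtain $\nx \wh{z}(x,t) \cdot \nu = 0$ on $\pa\O$.

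For the H\"older continuity, I would simply note that the right-hand side $\sum_{i=1}^N u_{i0}(x) + K_0 t$ is bounded on $\overline{Q_T}$ (with $u_{i0}$ even smooth by Assumption~\eqref{A6}), the initial datum is zero and trivially compatible with the Neumann boundary condition, and $\pa\O$ is smooth by Assumption~\eqref{A1}. Classical parabolic regularity for the linear heat equation with Neumann boundary condition (see e.g.\ Ladyzhenskaya--Solonnikov--Ural'tseva) then yields $\wh{z} \in C^{\beta,\beta/2}(\overline{Q_T})$ for some $\beta \in (0,1)$, completing the proof.

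The main (minor) obstacle is purely bookkeeping: making sure that $z$ is regular enough to justify pulling $\Delta$ and $\nx$ under the $\int_0^t$ sign, which follows directly from the smoothness asserted in Lemma~\ref{lem:boundz}. No further difficulty arises.
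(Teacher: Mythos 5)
Your proposal is correct and follows essentially the same route as the paper: integrate the equation for $z$ in time, use $\pa_t\wh z = z$, and invoke classical parabolic regularity for the resulting Neumann heat equation. The only thing the paper does in addition is to record the comparison-principle bound $\sup_{Q_T}|\wh z|\le (M+K_0T)T$ so that the H\"older constant can be taken of the explicit polynomial form $C_1(1+T^2)$, which is needed in the later lemmas to track the growth in $T$; your argument yields the H\"older continuity claimed in the statement but does not make this $T$-dependence explicit.
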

	\begin{proof}
		The boundary condition and initial data of $\wh{z}$ are obvious. Integrating the equation \eqref{sysz} of $z$ over $(0,t)$, we have
		\begin{equation*}
			z(x,t) - d\Delta \int_0^tz(x,s)ds = z(x,0) + K_0t
		\end{equation*}
		and hence
		\begin{equation*}
			\pa_t\wh{z} - d\Delta \wh{z} = z(x,0) = \sum_{i=1}^{N}u_{i0} + K_0t.
		\end{equation*}
The comparison principle implies
\begin{equation}\label{zhatLinfty}
			\sup_{Q_T}|\wh z| \leq (M+K_0T)T, \qquad \text{recalling}\quad M:= \sum_{i=1}^{N}\|u_{i0}\|_{L^\infty(\O)}.
\end{equation}
Therefore, H\"older continuity of $\wh{z}$ follows from classical parabolic theory (see e.g. \cite{Nash58} for $\Omega = \R^n$ or \cite[Theorem 4.8]{Lie96} for bounded domains). More precisely, there exists $\beta\in (0,1)$ and $C_1$ depending only on $\Omega$, $n$, $N$ and $M$ such that
		\begin{equation*}
			|\wh{z}(x,t) - \wh{z}(x',t')| \leq C_1(1+T^2)(|x-x'|^\beta + |t-t'|^{\beta/2}) \quad \text{ for all } \quad (x,t), (x',t')\in Q_T.
		\end{equation*}
		The term $1+T^2$ on the right hand side is because of the $L^\infty$ bound of $\wh{z}$ in \eqref{zhatLinfty}. 
	\end{proof}

\begin{lemma}\label{lem:uhat_eq}
	Define as in \eqref{vd}
	\begin{equation}\label{uhat_def}
		\wh{u}(x,t):= \int_0^t\sum_{i=1}^{N}d_iu_i(x,s)ds.
	\end{equation}
	Then, $\wh{u}$ solves
	\begin{equation}\label{uhat}
		b \pa_t\wh{u} - \Delta \wh{u} = \sum_{i=1}^{N}u_{i0} + K_0t, \qquad \nx \wh{u} \cdot \nu = 0, \qquad \wh{u}(x,0) = 0,
	\end{equation}
	where
		\begin{equation*}
			b(x,t) = \frac{\sum_{i=1}^{N}u_i(x,t)}{\sum_{i=1}^Nd_iu_i(x,t)}
		\end{equation*}
	satisfies the bound
	\begin{equation}\label{bbound}
		\frac{1}{\max\{d_i\}} \leq b(x,t) \leq \frac{1}{\min\{d_i\}} ,\qquad \text{ for all } \quad (x,t)\in Q_T.
	\end{equation}
\end{lemma}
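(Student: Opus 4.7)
The plan is to verify each of the three assertions in turn, in a sensible order: first the identity for $b$ and its bounds, then the PDE, and finally the boundary and initial data (which are almost free from the definition \eqref{uhat_def}).

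First I would establish the bounds \eqref{bbound}. Because the local solution is nonnegative (Proposition~\ref{thm:local}) and $d_i>0$, we have the pointwise inequalities
\begin{equation*}
\min_i\{d_i\}\sum_{i=1}^N u_i(x,t)\;\le\;\sum_{i=1}^N d_i u_i(x,t)\;\le\;\max_i\{d_i\}\sum_{i=1}^N u_i(x,t),
\end{equation*}
which immediately give \eqref{bbound} at any point where $\sum d_i u_i>0$. The only subtlety is the set where $\sum d_i u_i=0$, which forces $u_i\equiv 0$ for every $i$ there and so the quotient defining $b$ is $0/0$; I would just \emph{define} $b$ to equal, say, $1/\max\{d_i\}$ on that set. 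This is harmless because on the same set both $\pa_t\wh u=\sum d_i u_i$ and $\sum u_i$ vanish, so the identity $b\,\pa_t\wh u=\sum u_i$ used below holds regardless of the value assigned to $b$ there, and the bound \eqref{bbound} is trivially preserved.

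Next I would derive the PDE. Integrating each equation $\pa_t u_i-d_i\Delta u_i=f_i(u)$ in time from $0$ to $t$, summing over $i=1,\ldots,N$, and using \eqref{A3eq} on the right-hand side gives
\begin{equation*}
\sum_{i=1}^N u_i(x,t)-\sum_{i=1}^N u_{i0}(x)-\Delta\!\int_0^t\sum_{i=1}^N d_i u_i(x,s)\,ds=K_0\,t,
\end{equation*}
where the diffusion term is exactly $\Delta\wh u(x,t)$ by \eqref{uhat_def}. Meanwhile $\pa_t\wh u(x,t)=\sum d_i u_i(x,t)$, so by the definition of $b$ we have $b\,\pa_t\wh u=\sum u_i$, and substituting yields
\begin{equation*}
b(x,t)\,\pa_t\wh u(x,t)-\Delta\wh u(x,t)=\sum_{i=1}^N u_{i0}(x)+K_0\,t,
\end{equation*}
which is precisely \eqref{uhat}. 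The smoothness afforded by Remark~\ref{re_smooth} (so that interchanging $\pa_t$, $\Delta$, and the time integral is legitimate) is used implicitly here.

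Finally, the initial condition $\wh u(x,0)=0$ is immediate from \eqref{uhat_def}, and the Neumann condition $\nx\wh u\cdot\nu=0$ on $\pa\Omega\times(0,T)$ follows by differentiating under the integral and invoking $\nx u_i\cdot\nu=0$ for each $i$. The only real obstacle in this proof is the potential degeneracy of $b$ where all $u_i$ vanish simultaneously, and as explained above it is dealt with simply by assigning a definite value in the admissible range, since the equation \eqref{uhat} is satisfied on that set independently of the choice.
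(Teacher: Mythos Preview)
Your proof is correct and follows essentially the same approach as the paper: sum the equations in \eqref{eq1}, use \eqref{A3eq}, integrate in time, and identify $\pa_t\wh u=\sum d_iu_i$ to obtain \eqref{uhat}; the bounds on $b$ come from nonnegativity of the $u_i$. Your treatment is in fact slightly more careful than the paper's, since you explicitly address the degeneracy where $\sum d_iu_i=0$, which the paper passes over in silence.
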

\begin{proof}
	Summing all equations in \eqref{eq1} and using assumption \eqref{A3eq} leads to
	\begin{equation*}
		\pa_t\sum_{i=1}^N u_i - \Delta \sum_{i=1}^Nd_iu_i = K_0.
	\end{equation*}
	Integrating this equation over $(0,t)$ yields
	\begin{equation*}
		\sum_{i=1}^{N}u_i(x,t) - \Delta \wh{u} = \sum_{i=1}^{N}u_{i0}(x) + K_0t
	\end{equation*}
	which implies \eqref{uhat} since $\pa_t\wh{u} = \sum_{i=1}^{N}d_iu_i$. The bounds of $b(x,t)$ in \eqref{bbound} follows easily from the non-negativity of $u_i$.
\end{proof}
\begin{lemma}\label{lem:uhat}
	The function $\wh{u}$ defined in \eqref{uhat_def} is bounded in $Q_T$, more precisely $\|\wh{u}\|_{L^{\infty}(Q_T)} \leq d(M+K_0T)T$. Moreover, $\wh{u}$ is H\"older continuous with some exponent $\alpha \in (0,1)$, i.e.
	\begin{equation}\label{uhat_Holder}
		|\wh{u}(x, t) - \wh{u}(x', t')| \leq C_2(1+T^2)(|x - x'|^{\alpha} + |t - t'|^{\alpha/2}) \qquad \text{ for all }\quad (x,t), (x',t') \in Q_T,
	\end{equation}
	where $C_2$ and $\alpha$ depend on $\Omega$, $M$, $n$, $K_0$ and $d_i$. 
\end{lemma}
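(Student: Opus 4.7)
The plan splits into two parts: the uniform $L^\infty$ bound and the H\"older estimate \eqref{uhat_Holder}, with only the latter requiring the machinery deferred to the Appendix.

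For the $L^\infty$ bound, the lower bound $\wh{u} \geq 0$ is immediate from the definition \eqref{uhat_def}, since each $d_i > 0$ and $u_i \geq 0$. For the upper bound, I would invoke Lemma~\ref{lem:vd}, which gives $\wh{u} = d\wh{z} - v_d$, together with the fact that $v_d = \sum_{i=1}^{N}(d-d_i)v_i \geq 0$ — here $d > \max_i d_i$ by \eqref{A7} and each $v_i \geq 0$ follows from the maximum principle applied to \eqref{vi} with nonnegative data $u_i$ and zero initial data. Hence $\wh{u} \leq d\wh{z}$, and the bound $\sup_{Q_T}|\wh{z}| \leq (M+K_0T)T$ already recorded in \eqref{zhatLinfty} closes the estimate.

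For the H\"older estimate, the strategy is to read \eqref{uhat} as a linear parabolic equation in non-divergence form whose coefficients are bounded and uniformly parabolic, and whose right-hand side $\sum u_{i0} + K_0 t$ is smooth and depends only on the data. Concretely, dividing \eqref{uhat} by $b$ yields
\begin{equation*}
\pa_t \wh{u} - \frac{1}{b(x,t)}\Delta \wh{u} = \frac{1}{b(x,t)}\Bigl(\,\sum_{i=1}^{N} u_{i0} + K_0 t\Bigr),
\end{equation*}
and \eqref{bbound} gives $\min_i d_i \leq 1/b \leq \max_i d_i$ uniformly in $(x,t)$. Since both the right-hand side and $\wh{u}$ itself are bounded in $L^\infty(Q_T)$ by a constant of the form $C(1+T^2)$ via the first step, the classical Krylov--Safonov theorem for non-divergence form parabolic equations with bounded measurable coefficients yields interior H\"older continuity of $\wh{u}$ with an exponent $\alpha \in (0,1)$ depending only on $n$ and the ellipticity constants $\min_i d_i$ and $\max_i d_i$; the prefactor $1+T^2$ in \eqref{uhat_Holder} simply tracks the polynomial-in-$T$ dependence of the $L^\infty$ inputs.

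The main obstacle — and the reason for relegating the detailed argument to the Appendix — is H\"older regularity up to the lateral boundary $\pa\Omega\times(0,T)$, since Krylov--Safonov delivers only interior estimates. The standard remedy is to exploit the smoothness of $\pa\Omega$ from \eqref{A1} to flatten the boundary locally and extend $\wh{u}$ by even reflection across the flattened boundary; the Neumann condition $\nx\wh{u}\cdot\nu = 0$ ensures that the reflected function solves a non-divergence parabolic equation of the same structural type, with a reflected coefficient $b$ that is still bounded and uniformly parabolic (though now only measurable across the flattened boundary). Applying the interior estimate to the extended equation then produces the required boundary regularity, and joint H\"older regularity in $(x,t)$ with time exponent $\alpha/2$ follows from the usual parabolic scaling together with the equation \eqref{uhat}.
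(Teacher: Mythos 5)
Your $L^\infty$ bound is exactly the paper's argument: $\wh{u} = d\wh{z} - v_d$ from Lemma~\ref{lem:vd}, nonnegativity of $v_d$ via \eqref{A7} and the comparison principle for \eqref{vi}, and the bound \eqref{zhatLinfty} on $\wh{z}$. For the H\"older estimate your overall architecture also matches the paper's (an interior estimate for an equation with merely bounded measurable coefficient $b$, plus a reflection argument at the boundary), but you take a genuinely different route in both halves. In the interior you invoke Krylov--Safonov \cite{KS80} for the non-divergence form $\pa_t\wh{u} - b^{-1}\Delta\wh{u} = b^{-1}(\sum_i u_{i0}+K_0 t)$; the paper explicitly flags this as a viable alternative in the remark following the lemma, but instead derives De Giorgi-type energy estimates for the truncations $(\wh{u}-k)_+$ directly from \eqref{uhat} (Lemma~\ref{lem:ener}) and then quotes \cite[Chapter 2, Section 7]{LSU88}. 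At the boundary the paper reflects the \emph{divergence-form} version of the equation following \cite{Nit11}: after a bi-Lipschitz flattening and even reflection one obtains $\wt{b}\,\pa_t\wt{u}-\mathrm{div}(A\nabla\wt{u})=\wt{u}_0+K_0t$ with $A$ bounded, measurable and uniformly elliptic, and the energy estimates go through verbatim because weak solutions only require $H^1$ matching across the interface.

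Your plan to reflect the non-divergence form and reapply Krylov--Safonov is the one step that is asserted rather than justified, and it hides a genuine issue: after flattening, the Neumann condition becomes a conormal condition $a^{nj}\pa_j v=0$ rather than $\pa_n v=0$, so the even reflection is in general not $C^1$ across the interface and therefore not a strong ($W^{2,p}$) solution of any non-divergence equation there, which is what Krylov--Safonov needs. This can be repaired for the smooth boundary of \eqref{A1} by working in boundary-normal coordinates (so that $a^{nj}=0$ for $j<n$ on the interface) and flipping the signs of the reflected off-diagonal coefficients $a^{in}$, but that argument has to be spelled out; the divergence-form reflection avoids it entirely and also works for merely Lipschitz boundaries, which is why the paper chose it.
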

\begin{remark}
	The local H\"older continuity of $\wh{u}$ follows from a well-known result in \cite{KS80} since $\wh{u}$ also solves the following parabolic equation of non-divergence form
	\begin{equation*}
		\pa_t \wh{u} - \frac{1}{b}\Delta \wh{u} = \frac{1}{b}\sum_{i=1}^{N}u_{i0} + \frac{1}{b}K_0t.
	\end{equation*}
	The results in \cite{KS80} are based on a probabilistic approach. Here, we provide an alternative proof using \cite{LSU88} and \cite{Nit11}. A similar result was presented in \cite[Proposition 3.1]{CGV}, in which the authors proved H\"older continuity for a homogeneous equation.
\end{remark}
\begin{proof}[Proof of Lemma~\ref{lem:uhat}]
	Since $v_i \geq 0$ and $d\geq d_i$ for all $i=1,\ldots, N$, we use $v_d =  d\wh{z} - \wh{u}$ to get
	\begin{equation*}
		0 \leq \wh{u} = d\wh{z} - v_d \leq d\wh{z}.
	\end{equation*}
	Therefore, thanks to Lemma \ref{lem:zhat},
	\begin{equation}\label{Linfbound}
		\sup_{Q_T}|\wh{u}|  \leq d\sup_{Q_T}|\wh{z}| \leq d(M+K_0T)T.
	\end{equation}
	
	The proof of H\"older continuity is technical and lengthy, and therefore, we postpone it to the Appendix \ref{appendix}.
	\end{proof}

\begin{lemma}\label{lem:vdreg}
The function $v_d= d\wh{z} - \wh{u}$ as defined in \eqref{zvd} is H\"older continuous in $Q_T$, i.e. there exist $C_5>0$ and $\delta \in (0,1)$ such that
	\begin{equation}\label{vd_Holder}
		|v_d(x,t) - v_d(x', t')| \leq C_5(1+T^2)(|x -x'|^\delta + |t-t'|^{\delta/2}) \quad \text{ for all } \quad (x,t), (x', t') \in Q_T.
	\end{equation}
\end{lemma}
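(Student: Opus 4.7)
The proof I have in mind is almost immediate, since the hard analytic work has already been done in Lemmas \ref{lem:zhat} and \ref{lem:uhat}. The plan is to decompose $v_d = d\wh{z} - \wh{u}$ via the triangle inequality and then combine the two H\"older estimates with matching exponents.

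More precisely, I would start from
\begin{equation*}
|v_d(x,t) - v_d(x',t')| \leq d\,|\wh{z}(x,t) - \wh{z}(x',t')| + |\wh{u}(x,t) - \wh{u}(x',t')|
\end{equation*}
and invoke Lemma \ref{lem:zhat} (yielding exponent $\beta \in (0,1)$) together with Lemma \ref{lem:uhat} (yielding exponent $\alpha \in (0,1)$). Setting $\delta := \min\{\alpha,\beta\} \in (0,1)$ and absorbing the two constants into a single $C_5$, the only technicality is that the bounds come with exponents $\alpha$ and $\beta$ which may differ from $\delta$. On a bounded domain this is harmless: since $|x-x'| \leq \mathrm{diam}(\Omega)$ and $|t-t'|\leq T$, we may replace $|x-x'|^{\alpha}$ by $(\mathrm{diam}(\Omega))^{\alpha-\delta}|x-x'|^{\delta}$ (and similarly for the time increment and for $\beta$), which only enlarges the constant.

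For the case $\Omega = \mathbb R^n$, the $L^\infty$-bounds \eqref{Linfbound} and \eqref{zhatLinfty} take over when $|x-x'|$ is large: if $|x-x'|\geq 1$ (or $|t-t'|\geq 1$), I would estimate the difference trivially by $2\|\wh{z}\|_{L^\infty(Q_T)} + 2\|\wh{u}\|_{L^\infty(Q_T)} \leq C(1+T^2)(|x-x'|^{\delta}+|t-t'|^{\delta/2})$, while on the region where both increments are at most $1$ the H\"older estimates of Lemmas \ref{lem:zhat} and \ref{lem:uhat} apply and $|x-x'|^{\alpha}\leq |x-x'|^{\delta}$ holds automatically (and similarly in time). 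In either geometric situation one obtains the desired bound \eqref{vd_Holder} with constant $C_5$ depending only on $\Omega$, $n$, $N$, $d_i$, $K_0$ and $M$, and exponent $\delta = \min\{\alpha,\beta\}$.

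I do not expect any genuine obstacle here: all the difficulty was concentrated in proving the H\"older continuity of $\wh{u}$ (Lemma \ref{lem:uhat}, deferred to the Appendix), which is a non-trivial statement since $\wh{u}$ solves the non-divergence form equation \eqref{uhat} with only a bounded (not continuous) coefficient $b(x,t)$. Once that is in hand, Lemma \ref{lem:vdreg} is just linearity plus a routine bookkeeping of exponents and constants.
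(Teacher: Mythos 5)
Your proposal is correct and coincides with the paper's own argument: the paper likewise writes $|v_d(x,t)-v_d(x',t')|\leq d|\wh{z}(x,t)-\wh{z}(x',t')|+|\wh{u}(x,t)-\wh{u}(x',t')|$, sets $\delta=\min\{\alpha,\beta\}$, uses $|x-x'|^{\alpha}\leq|x-x'|^{\delta}$ for increments at most $1$, and falls back on the $L^\infty$-bound $\|v_d\|_{L^\infty(Q_T)}\leq 2d(M+K_0T)T$ when $|x-x'|\geq 1$ or $|t-t'|\geq 1$. No gaps.
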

\begin{proof}
The H\"older continuity follows clearly from the Lemmas \ref{lem:zhat} and \ref{lem:uhat}. In the following, we provided 
explicit estimates for the constants. First, for $|x -x'|, |t - t'| \leq 1$, we estimate
	\begin{equation}\label{est_vd1}
		\begin{aligned}
			|v_d(x,t) - v_d(x',t')| &\leq d|\wh{z}(x,t) - \wh{z}(x',t')| + |\wh{u}(x,t) - \wh{u}(x',t')|\\
			&\leq \max\{dC_1,C_2 \}(1+T^2)(|x-x'|^{\delta} + |t-t'|^{\delta/2}),
		\end{aligned}
	\end{equation}
	where $\delta = \min\{\alpha, \beta\}$ with $\beta$ and $\alpha$ as in Lemmas \ref{lem:zhat} and \ref{lem:uhat}. 
	When $|x - x'| \geq 1$ or $|t - t'| \geq 1$, we estimate
	\begin{equation}\label{est_vd2}
		\begin{aligned}
		|v_d(x,t) - v_d(x',t')| &\leq 2\|v_d\|_{L^{\infty}(Q_T)}(|x - x'|^{\delta} + |t-t'|^{\delta/2})\\
		&\leq 2d(M+K_0T)T(|x - x'|^{\delta} + |t-t'|^{\delta/2})
		\end{aligned}
	\end{equation}
	since $\|v_d\|_{L^{\infty}(Q_T)}\leq d\|\wh{z}\|_{L^\infty(Q_T)} + \|\wh{u}\|_{L^\infty(Q_T)} \leq 2d(M+K_0T)T$. From \eqref{est_vd1} and \eqref{est_vd2}, we get \eqref{vd_Holder} with $\delta = \min\{\alpha, \beta\}$.
\end{proof}

\begin{lemma}\label{lem:gradvd}
	Let $|U| = \sup_{Q_T}\max_{i=1,\ldots,N}|u_i|$. Then, there exists a constant $C_6 >0$ such that
	\begin{equation}\label{gradvd}
		\sup_{Q_T}|\nx v_d| \leq C_6\,(1+T^2)^{\frac{1}{2-\delta}}|U|^{\frac{1-\delta}{2-\delta}}.
	\end{equation}
\end{lemma}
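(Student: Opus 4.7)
The plan is straightforward: apply the key Regularity Interpolation Lemma~\ref{lem:main} directly to the scalar parabolic problem~\eqref{vdeq} satisfied by $v_d$, since all the hypotheses have essentially been verified in the preceding lemmas. I identify the data as follows. The equation $\pa_t v_d - d\Delta v_d = u_d$ with homogeneous Neumann boundary condition and $v_d(\cdot,0)=0$ matches~\eqref{heat-eq} exactly, with inhomogeneity $\phi = u_d$, zero initial datum, and diffusion coefficient $d$. Because $v_d(\cdot,0)=0$, the term $C\|u_0\|_{C^1(\Omega)}$ in the conclusion of Lemma~\ref{lem:main} vanishes. The right-hand side admits the pointwise bound
\[
|u_d(x,t)| = \Bigl|\sum_{i=1}^N (d-d_i)u_i(x,t)\Bigr| \leq N\,d\,|U|,
\]
so I can take $F = N d\,|U|$ in hypothesis~\eqref{AA2}.

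Next I supply the H\"older hypothesis. By Lemma~\ref{lem:vdreg}, $v_d$ is jointly $\delta$-H\"older continuous on $Q_T$ with constant bounded by $C_5(1+T^2)$; restricting to fixed $t$ immediately yields the spatial estimate~\eqref{AA1} with $H = C_5(1+T^2)$ and exponent $\gamma = \delta \in (0,1)$. Substituting these choices into the conclusion of Lemma~\ref{lem:main} produces
\[
\sup_{Q_T}|\nx v_d| \leq B\,\bigl[C_5(1+T^2)\bigr]^{\frac{1}{2-\delta}}(Nd\,|U|)^{\frac{1-\delta}{2-\delta}} \leq C_6\,(1+T^2)^{\frac{1}{2-\delta}}\,|U|^{\frac{1-\delta}{2-\delta}},
\]
where $C_6$ absorbs $B$, $C_5^{1/(2-\delta)}$ and $(Nd)^{(1-\delta)/(2-\delta)}$, all of which depend only on the data and not on $T$ or $|U|$. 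This is exactly the claimed estimate~\eqref{gradvd}.

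I do not anticipate any genuine obstacle: all the heavy analytic work has already been carried out in Lemma~\ref{lem:main} (kernel estimates for the Neumann heat semigroup) and in Lemma~\ref{lem:vdreg} (H\"older continuity of $v_d$ obtained from $v_d=d\wh z-\wh u$ via the estimates of Lemmas~\ref{lem:zhat} and~\ref{lem:uhat}), so Lemma~\ref{lem:gradvd} reduces to a one-line bookkeeping application. The only points worth a moment's attention are that $v_d(\cdot,0)=0$ kills the initial-data contribution, that $|u_d|$ is controlled linearly by $|U|$ with a dimension-independent constant because $0\le d-d_i\le d$, and that the $(1+T^2)$ factor from the H\"older constant of $v_d$ is simply propagated through the interpolation exponent $1/(2-\delta)\in(1/2,1)$ into the final estimate, yielding at most polynomial growth in $T$.
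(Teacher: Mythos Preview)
Your proof is correct and follows essentially the same approach as the paper: both apply Lemma~\ref{lem:main} to the equation $\pa_t v_d - d\Delta v_d = u_d$ with zero initial data, using the spatial $\delta$-H\"older bound $H = C_5(1+T^2)$ from Lemma~\ref{lem:vdreg} and a right-hand side bound $F$ proportional to $|U|$. Your version is in fact slightly more explicit about the constant, tracking the factor $(Nd)^{(1-\delta)/(2-\delta)}$ that the paper absorbs silently into $C_6$.
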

\begin{proof}
	From Lemma \ref{lem:vdreg}, we know that $v_d$ is H\"older continuous. In particular
	\begin{equation*}
		|v_d(x,t) - v_d(x',t)| \leq C_5\,(1+T^2)|x - x'|^{\delta}, \qquad \text{ for all } \quad (x,t), (x',t)\in Q_T.
	\end{equation*}
	By the initial data $v_i(x,0) = 0$ and $\sup_{Q_T}|u_i| \leq |U|$ (i.e. by the definition of $|U|$), we can apply Lemma \ref{lem:main} to \eqref{vi} and obtain
	\begin{equation*}
		\sup_{Q_T}|\nx v_d| \leq B(C_5\,(1+T^2))^{\frac{1}{2-\delta}}|U|^{\frac{1-\delta}{2-\delta}},
	\end{equation*}
	whence \eqref{gradvd} with $C_6 = BC_5^{\frac{1}{2-\delta}}$.
\end{proof}

\begin{lemma}\label{lem:gradud}
	There exists a constant $C_7 >0$ such that
	\begin{equation*}
		\sup_{Q_T}|\nx u_d| \leq C_7\left(1+|U|^{\frac{3+\varepsilon}{2}} \right).
	\end{equation*}
\end{lemma}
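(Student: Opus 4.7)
The plan is to apply Lemma \ref{lem:main} to each equation $\partial_t u_i - d_i \Delta u_i = f_i(u)$ individually, with the Hölder exponent $\gamma = 0$, and then combine the resulting gradient bounds through the definition $u_d = \sum_{i=1}^{N}(d - d_i)u_i$.

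First, I would verify the two hypotheses of Lemma \ref{lem:main} for each $u_i$. Hypothesis (i) with $\gamma = 0$ reduces to a uniform bound: for every $x, x' \in \Omega$ and $t \in (0,T)$,
\begin{equation*}
|u_i(x,t) - u_i(x',t)| \leq 2 \sup_{Q_T}|u_i| \leq 2|U|,
\end{equation*}
so we may take $H = 2|U|$. Hypothesis (ii) is immediate from assumption \eqref{A5}: since $0 \leq u_i \leq |U|$, we have $|u(x,t)| \leq \sqrt{N}\,|U|$, and therefore
\begin{equation*}
\sup_{Q_T}|f_i(u)| \leq K\bigl(1 + N^{(2+\varepsilon)/2}|U|^{2+\varepsilon}\bigr) \leq K'\bigl(1 + |U|^{2+\varepsilon}\bigr),
\end{equation*}
so we may take $F = K'(1 + |U|^{2+\varepsilon})$.

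Applying Lemma \ref{lem:main} with $\gamma = 0$ to each component $u_i$ yields
\begin{equation*}
\sup_{Q_T}|\nabla_x u_i| \leq C \|u_{i0}\|_{C^1(\Omega)} + B\, (2|U|)^{1/2}\bigl(K'(1+|U|^{2+\varepsilon})\bigr)^{1/2}.
\end{equation*}
Bounding $(1+|U|^{2+\varepsilon})^{1/2} \leq 1 + |U|^{(2+\varepsilon)/2}$ and absorbing constants shows
\begin{equation*}
\sup_{Q_T}|\nabla_x u_i| \leq C'\Bigl(1 + |U|^{1/2}\bigl(1 + |U|^{1+\varepsilon/2}\bigr)\Bigr) \leq C''\bigl(1 + |U|^{(3+\varepsilon)/2}\bigr),
\end{equation*}
where the constants depend on $\Omega$, $n$, $d_i$, $K$, $\varepsilon$, and the $C^1$-norms of the (smoothed) initial data, but not on $T$.

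Finally, since $u_d = \sum_{i=1}^N (d - d_i) u_i$, the triangle inequality gives
\begin{equation*}
\sup_{Q_T}|\nabla_x u_d| \leq \sum_{i=1}^{N}|d - d_i|\sup_{Q_T}|\nabla_x u_i| \leq C_7\bigl(1 + |U|^{(3+\varepsilon)/2}\bigr),
\end{equation*}
which is the claimed estimate. No genuine obstacle arises here: the only point worth noting is that taking $\gamma = 0$ in Lemma \ref{lem:main} is legitimate (the constants $B_1, B_3$ in its proof remain finite at $\gamma = 0$), and that \eqref{A5} is precisely strong enough to convert the bound $|U|^{2+\varepsilon}$ on $f_i$ into the exponent $(3+\varepsilon)/2$ after taking the square root prescribed by the lemma.
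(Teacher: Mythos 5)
Your proposal is correct and follows essentially the same route as the paper: apply Lemma \ref{lem:main} with $\gamma=0$ to each equation $\partial_t u_i - d_i\Delta u_i = f_i(u)$, taking $H=2|U|$ from the trivial oscillation bound and $F\lesssim 1+|U|^{2+\varepsilon}$ from \eqref{A5}, then sum over $i$ using $u_d=\sum_i(d-d_i)u_i$. The paper's proof is the same computation, including the observation that the resulting constant depends on the $C^1$-norms of the (smoothed) initial data but not on $T$.
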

\begin{proof}
	Firstly by the definition of $u_d$ in \eqref{vdeq} it follows
	\begin{equation*}
		\sup_{Q_T}|\nx u_d| \leq \sum_{i=1}^{N}(d-d_i)\sup_{Q_T}|\nx u_i|.
	\end{equation*}
	In order to apply Lemma \ref{lem:main} to equation of $u_i$ we first observe that
	\begin{equation*}
		|u_i(x,t) - u_i(x',t)| \leq 2\sup_{Q_T}|u_i| \leq 2|U||x-x'|^0, \qquad \text{ for all } (x,t), (x',t)\in Q_T.
	\end{equation*}
	The right hand sides in \eqref{eq1} can be estimated by using the growth assumption \eqref{A5}
	\begin{equation*}
		\sup_{Q_T}|f_i(u)| \leq K(1+ \sup_{Q_T}|u|^{2+\varepsilon}) \leq KN^{2+\varepsilon}(1+|U|^{2+\varepsilon}).
	\end{equation*}
	Now we can apply Lemma \ref{lem:main} to the equation of $u_i$ with the exponent $\gamma = 0$ to obtain
	\begin{equation*}
		\begin{aligned}
		\sup_{Q_T}|\nx u_i| &\leq C\|u_{i0}\|_{C^1(\Omega)} + B(2|U|)^{1/2}\left(KN^{2+\varepsilon}(1+|U|^{2+\varepsilon})\right)^{1/2}\\
		&\leq C\|u_{i0}\|_{C^1(\Omega)} + C_8\left(1+|U|^{\frac 32 + \frac \varepsilon2}\right)
		\end{aligned}
	\end{equation*}
	where $C_8$ depends on $B, K$ and $N$.
	Hence
	\begin{equation*}
		\sup_{Q_T}|\nx u_d| \leq \max\{d_i\}\sum_{i=1}^{N}\sup_{Q_T}|\nx u_i| \leq C_7\left(1+ |U|^{\frac{3+\varepsilon}{2}}\right),
	\end{equation*}
	which is the claim of Lemma \ref{lem:gradud} where $C_7$ depends on $\|u_{i0}\|_{C^1(\Omega)}$, $d_i$, $N$ and $C_8$.
\end{proof}
	{\color{black}
\begin{lemma}\label{lem:Lapvd}
	There exists a constant $C_9>0$ such that
	\begin{equation*}
		\sup_{Q_T}|\Delta v_d| \leq C_9(1+T)\left(1+|U|^{\frac{3+\varepsilon}{4} + \frac{1-\delta}{2(2-\delta)}}\right).
	\end{equation*}
\end{lemma}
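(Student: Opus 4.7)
The plan is to first establish the parabolic interpolation estimate
\begin{equation*}
\sup_{Q_T}|\Delta v_d| \leq C_T\bigl(\sup_{Q_T}(|v_d|+|\nx v_d|)\bigr)^{1/2}\bigl(\sup_{Q_T}(|u_d| + |\nx u_d|)\bigr)^{1/2}
\end{equation*}
already written out in (\ref{boundDvd}), and then to plug in the bounds gathered from the preceding lemmas. Starting from equation (\ref{vdeq}), which reads $\pa_t v_d - d\Delta v_d = u_d$ with zero initial data and homogeneous Neumann boundary, the key observation (as attributed to Souplet in the footnote) is that the parabolic equation allows one to interpolate $\|\Delta v_d\|_{L^\infty}$ geometrically between a $C^1$-norm of the unknown $v_d$ and of the source $u_d$.

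On $\mathbb R^n$ the shortest route is the one mentioned in the footnote: differentiate (\ref{vdeq}) in $x_j$ so that $\pa_{x_j} v_d$ solves the same heat equation with source $\pa_{x_j} u_d$, and apply Lemma~\ref{lem:main} with $\gamma=0$. On a bounded domain this shortcut fails because $\pa_{x_j}v_d$ does not satisfy a Neumann condition on $\partial\Omega$. To bypass this I would use the Neumann heat-semigroup representation $v_d(t)=\int_0^t e^{d(t-s)\Delta_N}u_d(s)\,ds$, rewrite $d\Delta v_d=\pa_t v_d-u_d$, and control $\pa_t v_d$ by splitting the Duhamel integral and balancing the two kernel bounds $\|\Delta e^{\tau\Delta_N}f\|_\infty\lesssim\tau^{-1}\|f\|_\infty$ and $\|\Delta e^{\tau\Delta_N}f\|_\infty\lesssim\tau^{-1/2}\|\nx f\|_\infty$, both available from the Neumann heat-kernel bounds already used in Section~\ref{sec:lem}. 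Optimizing the time-split in terms of the ratio $\|u_d\|_\infty/\|\nx u_d\|_\infty$ produces exactly the geometric mean above, with a constant $C_T$ depending polynomially on $T$.

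Granting this inequality, the rest is routine. From the identity $v_d=d\wh z-\wh u$ together with Lemmas~\ref{lem:zhat} and \ref{lem:uhat} one has $\|v_d\|_{L^\infty(Q_T)}\leq 2d(M+K_0T)T$, uniformly in $|U|$, and from the definition $u_d=\sum_{i=1}^N(d-d_i)u_i$ one has $\|u_d\|_{L^\infty(Q_T)}\leq C|U|$. Combining with Lemmas~\ref{lem:gradvd} and \ref{lem:gradud} yields
\begin{equation*}
\|v_d\|_\infty+\|\nx v_d\|_\infty \leq C_T\bigl(1+|U|^{\frac{1-\delta}{2-\delta}}\bigr),\qquad \|u_d\|_\infty+\|\nx u_d\|_\infty \leq C\bigl(1+|U|^{\frac{3+\varepsilon}{2}}\bigr),
\end{equation*}
where $C_T$ grows at most as $(1+T)^2$. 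Taking square roots, multiplying and absorbing the constants gives
\begin{equation*}
\sup_{Q_T}|\Delta v_d| \leq C_9(1+T)\bigl(1+|U|^{\frac{3+\varepsilon}{4} + \frac{1-\delta}{2(2-\delta)}}\bigr),
\end{equation*}
as claimed.

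The main obstacle is the interpolation inequality itself on the bounded domain: unlike in $\mathbb R^n$ there is no immediate reduction to a previous lemma, and one must carefully unwind the Neumann heat-kernel bounds and optimize the time splitting so as not to destroy boundary compatibility. The second stage, in contrast, is a purely algebraic combination of previously established bounds.
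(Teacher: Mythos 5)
Your target inequality $\sup_{Q_T}|\Delta v_d| \leq C\,\sup_{Q_T}(|v_d|+|\nx v_d|)^{1/2}\sup_{Q_T}(|u_d|+|\nx u_d|)^{1/2}$ and the algebraic combination with Lemmas \ref{lem:vdreg}, \ref{lem:gradvd} and \ref{lem:gradud} in your second stage are exactly right and match the paper. The gap is in the mechanism you propose for proving that inequality on a bounded domain. Splitting the Duhamel integral $\Delta v_d=\int_0^t\Delta e^{(t-s)d\Delta}u_d(s)\,ds$ at $t-s=\tau_0$ and using $\|\Delta e^{\tau\Delta}f\|_\infty\lesssim\tau^{-1}\|f\|_\infty$ on the far piece and $\|\Delta e^{\tau\Delta}f\|_\infty\lesssim\tau^{-1/2}\|\nx f\|_\infty$ on the near piece yields at best $\|u_d\|_\infty\log(T/\tau_0)+\tau_0^{1/2}\|\nx u_d\|_\infty$, and no choice of $\tau_0$ turns this into a geometric mean involving the $C^1$-norm of $v_d$: the function $v_d$ simply never enters your estimate, only $u_d$ does. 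Worse, any bound of this shape is at least linear in $\|u_d\|_\infty\sim|U|$ (up to logarithms), so the resulting inequality $|U|\leq C_T(1+|U|\log|U|)$ cannot close the bootstrap at the end of Section \ref{sec:thm}, which needs an exponent strictly below $1$. The whole point of the interpolation is to trade part of the power of $|U|$ coming from $u_d$ against the much smaller power $|U|^{\frac{1-\delta}{2-\delta}}$ coming from the H\"older continuity of $v_d$.

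The paper's device is different and is the same interpolation-parameter trick as in Lemma \ref{lem:main}: add $kv_d$ to both sides of \eqref{vdeq} before applying Duhamel, so that
\begin{equation*}
v_d(t)=\int_0^te^{-k(t-s)}e^{(t-s)d\Delta}\bigl(kv_d(s)+u_d(s)\bigr)\,ds,
\end{equation*}
then apply the single semigroup estimate $\|e^{\tau d\Delta}f\|_{C^2(\Omega)}\leq C\tau^{-1/2}\|f\|_{C^1(\Omega)}$ together with $k^{j}\int_0^te^{-k\tau}\tau^{-1/2}\,d\tau\leq Ck^{j-1/2}$, obtaining $\sup_{Q_T}|\Delta v_d|\leq C\bigl(\sqrt{k}\,\|v_d\|_{C^1}+k^{-1/2}\|u_d\|_{C^1}\bigr)$, and finally optimize over $k$. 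The crucial feature --- absent from your time-splitting --- is that the unknown $v_d$ re-enters the source through the $kv_d$ term, which is precisely how the factor $\sup_{Q_T}(|v_d|+|\nx v_d|)^{1/2}$ appears. If you replace your time-splitting argument by this exponential-weight argument (which also avoids any boundary-compatibility issue, since only the Neumann semigroup acting on $v_d$ and $u_d$ is used), the rest of your write-up goes through unchanged.
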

\begin{proof}
	By definition $v_d = \sum_{i=1}^{N}(d-d_i)v_i$, we have
	\begin{equation*}
		\begin{aligned}
		\pa_t v_d - d\Delta v_d &= \pa_t\sum_{i=1}^{N}(d-d_i)v_i  - d\Delta \sum_{i=1}^{N}(d-d_i)v_i
		= \sum_{i=1}^{N}(d-d_i)(\pa_t v_i - d\Delta v_i)\\
		&= \sum_{i=1}^{N}(d-d_i)u_i = u_d.
		\end{aligned}
	\end{equation*}	
	
	Let $k>0$ to be chosen later, we define $\eta(x,t) = e^{kt}v_d(x,t)$. It follows from the equation of $v_d$ that
	\begin{equation}\label{eq_eta}
		\pa_t\eta - d\Delta \eta = e^{kt}(kv_d + u_d), \qquad \nabla_x \eta \cdot \nu = 0,\qquad \eta(x,0)= v_d(x,0) = 0.
	\end{equation}
	By Duhamel's formula
	\begin{equation*}
		\eta(x,t) = \int_0^te^{(t-s)d\Delta}e^{ks}(kv_d(x,s) + u_d(x,s))ds.
	\end{equation*}
	Changing back to $v_d$ we get
	\begin{equation*}
		v_d(x,t) = \int_0^te^{-k(t-s)}e^{(t-s)d\Delta}(kv_d(x,s) + u_d(x,s))ds.
	\end{equation*}
	We now apply the second order estimate for semigroup, see e.g. \cite[Eq. (2.39)]{Mor83}
	\begin{equation}\label{Delta_vd}
		\|e^{td\Delta }f\|_{C^2(\Omega)} \leq Ct^{-1/2}\|f\|_{C^1(\Omega)}, \qquad \text{for}\quad t\in(0,T]
	\end{equation}
	to have
	\begin{align*}
		\sup_{Q_T}|\Delta v_d| \leq C\sup_{t\in(0,T)}\|v_d(t)\|_{C^2(\Omega)} &\leq C\int_0^te^{-k(t-s)}(t-s)^{-1/2}[k\|v_d(s)\|_{C^1(\Omega)} + \|u_d(s)\|_{C^1(\Omega)}]ds,
	\end{align*}
	where in this proof we always denote by $C$ a generic constant {\it independent of $T$}. We estimate
	\begin{align*}
	\int_0^te^{-k(t-s)}(t-s)^{-1/2}k\|v_d(s)\|_{C^1(\Omega)}ds
	&\leq \sup_{s\in (0,T)}\|v_d(s)\|_{C^1(\Omega)}k\int_0^te^{-k(t-s)}(t-s)^{-1/2}ds\\
	&\leq C\sup_{Q_T}(|v_d| + |\nabla v_d|)\sqrt{k} \int_0^\infty s^{-1/2}e^{-s}ds\\
	&\leq C\sup_{Q_T}(|v_d| + |\nabla v_d|)\sqrt{k}
	\end{align*}
	and similarly
	\begin{equation*}
		\int_0^te^{-k(t-s)}(t-s)^{-1/2}\|u_d(s)\|_{C^1(\Omega)}ds \leq C\sup_{Q_T}(|u_d| + |\nabla u_d|)\frac{1}{\sqrt{k}}.
	\end{equation*}
	By choosing $\sqrt{k} = \sup_{Q_T}(|u_d| + |\nabla u_d|)^{1/2}\sup_{Q_T}(|v_d| + |\nabla v_d|)^{-1/2}$, we obtain from \eqref{Delta_vd}
	\begin{equation*}
		\sup_{Q_T}|\Delta v_d| \leq C\sup_{Q_T}(|u_d| + |\nabla u_d|)^{1/2}\sup_{Q_T}(|v_d| + |\nabla v_d|)^{1/2}.
	\end{equation*}
	Therefore, by using Lemma~\ref{lem:vdreg}, i.e. $\sup_{Q_T}|v_d| \leq 2d(M+K_0T)T$ and Lemmas \ref{lem:gradvd}, \ref{lem:gradud}, we finally get
	\begin{align*}
		\sup_{Q_T}|\Delta v_d| &\leq C\left(|U| + 1 + |U|^{\frac{3+\varepsilon}{2}}\right)^{1/2}\left(1 + T^2 +  (1+T^2)^{\frac{1}{2-\delta}}|U|^{\frac{1-\delta}{2-\delta}}\right)^{1/2}\\
		&\leq C(1+ T)\left(1+ |U|^{\frac{3+\varepsilon}{4}+\frac{1-\delta}{2(2-\delta)}}\right).
	\end{align*}
\end{proof}
\begin{remark}
	In the case $\Omega = \mathbb R^d$, we can apply Lemma \ref{lem:main} to the equation of $\pa_{x_j}v_d$ to obtain Lemma \ref{lem:Lapvd} immediately. This argument seems to not directly work in the case of a bounded domain since the Neumann boundary condition is not satisfied by $\pa_{x_j}v_d$.
\end{remark}
}

We are now ready to prove Theorem \ref{thm:main} with \eqref{A3eq}.
\begin{proof}[Proof of Theorem \ref{thm:main} with \eqref{A3eq}]
	From \eqref{zvd} we have
	\begin{equation*}
		|U| = \max_{i=1,\ldots, N}\sup_{Q_T}|u_i| \leq \sum_{i=1}^{N}\sup_{Q_T}|u_i| \leq \sup_{Q_T}(|z| + |\Delta v_d|).
	\end{equation*}
	By using Lemmas \ref{lem:boundz} and \ref{lem:Lapvd}, it follows
	\begin{equation}\label{est.U}
	\begin{aligned}
		|U| &\leq (M + K_0T)+ C_9(1+T)\left(1+|U|^{\frac{3+\varepsilon}{4} + \frac{1-\delta}{2(2-\delta)}}\right)\\ &\leq C_{11}\left[1+ T+(1+T)|U|^{\frac{3+\varepsilon}{4} + \frac{1-\delta}{2(2-\delta)}}\right],
	\end{aligned}
	\end{equation}
	where $C_{11}$ depends only on $N$, $C_9$ and $M$. We choose $\varepsilon$ small enough such that
	\begin{equation*}
		\lambda:= \frac{3+\varepsilon}{4} + \frac{1-\delta}{2(2-\delta)} < 1 \quad \text{ or equivalently } \quad \varepsilon < \frac{\delta}{2 - \delta}.
	\end{equation*}
	By using Young's inequality we have
	\begin{equation*}
		C_{11}(1+T)|U|^{\lambda} \leq \frac{|U|}{2} + (1-\lambda)(2\lambda C_{11})^{\frac{\lambda}{1-\lambda}}(1+T)^{\frac{1}{1-\lambda}}
	\end{equation*}
	and therefore it follows from \eqref{est.U} 
	\begin{equation*}
		|U| \leq 2C_{11}\left[1+T\right] + 2(1-\lambda)(2\lambda C_{11})^{\frac{\lambda}{1-\lambda}}(1+ T)^{\frac{1}{1-\lambda}} \leq C_{12}(1+T^{\frac{1}{1-\lambda}}),
	\end{equation*}
	where $C_{12}$ depends only on $C_{11}$, $\varepsilon$ and $\delta$.
	The uniqueness follows immediately thanks to the $L^\infty$-bound and the local Lipschitz continuity of the nonlinearities.
\end{proof}

\begin{remark}\label{remark:K0t}
	It's straightforward that all the arguments of this section are still valid in case $K_0$ is replace by a function $K_0(t)$ which is continuous on $[0,\infty)$, except that the $L^\infty$-norm of the solution might grow faster than polynomial.
\end{remark}

\section{Proof of Theorem \ref{thm:main} with condition \eqref{A3}}\label{sec:control}
\begin{proof}[Proof of Theorem \ref{thm:main} with \eqref{A3}]
Our main idea is that with a suitable change of unknowns, and especially {\it adding one more appropriate equation}, we can transform a system with the mass control condition \eqref{A3} into a system with condition \eqref{A3eq}, that keeps the essential features \eqref{A4} and \eqref{A5}.


\medskip
We define 
\[w_i(x,t) = e^{-K_1t}u_i(x,t) \quad \text{ or equivalently } \quad
u_i(x,t) = e^{K_1 t}w_i(x,t)
\]
and $w = (w_1, \ldots, w_N)$. Direct computations give
\begin{align*}
	\pa_t w_i &= e^{-K_1t}(\pa_t u_i - K_1u_i)\\
	&= e^{-K_1t}(d_i\Delta u_i + f_i(u) - K_1u_i)\\
	&= d_i\Delta w_i + g_i(w)
\end{align*}
where 
\begin{equation}\label{gi}
	g_i(w) = e^{-K_1t}(f_i(u) - K_1e^{K_1t}w_i).
\end{equation}
Note that
\begin{align}\label{m+1}
	\sum_{i=1}^{N}g_i(w) &= e^{-K_1t}\sum_{i=1}^{N}(f_i(u) - K_1 u_i)\nonumber
	\leq e^{-K_1t}K_0
\end{align}
due to the assumption \eqref{A3}.

Introduce a new unknown $w_{N+1}: \Omega\times(0,T_{\max}) \to \mathbb \R_+$ which solves
\begin{equation}\label{wm1}
	\pa_t w_{N+1} - \Delta w_{N+1} = K_0e^{-K_1 t} - \sum_{i=1}^{N}g_i(w) =: g_{N+1}(w) \geq 0
\end{equation}
with homogeneous Neumann boundary condition $\nabla w_{N+1}\cdot \nu = 0$ and zero initial data $w_{N+1}(x,0) = w_{N+1,0}(x) = 0$ for $x\in\Omega$. With a slight abuse of notation we write the new vector of concentrations $\wt w = (w_1, \ldots, w_N, w_{N+1})$ and the nonlinearities $g_i(\wt w):= g_i(w_1, \ldots, w_N)$ for all $i=1,\ldots, N$ while $g_{N+1}(\wt w) = K_0e^{-K_1 t} - \sum_{i=1}^Ng_i(w)$. We have arrived at the following system
\begin{equation}\label{Sw}
	\begin{cases}
		\pa_t w_i - d_i\Delta w_i = g_i(\wt w), &(x,t)\in \Omega \times (0,T_{\max}), \quad i=1,\ldots, N+1,\\
		\nabla w_i \cdot \nu = 0, &(x,t)\in \pa\Omega\times (0,T_{\max}), \quad i=1,\ldots, N+1,\\
		w_i(x,0)  =  u_{i,0}(x), & x\in\Omega, \quad i=1,\ldots, N,\\
		w_{N+1}(x,0) = w_{N+1,0}(x) = 0, & x\in\Omega.
	\end{cases}
\end{equation}
It's obvious to check that the nonlinearities $g_i, i = 1,\ldots, N+1$ satisfy the assumption \eqref{A4}. Moreover, due to the definition $w_i(x,t) = e^{-K_1t}u_i(x,t)$, it follows from \eqref{A5} and \eqref{gi} the growth control
\begin{equation}\label{growth_new}
\begin{aligned}
	|g_i(\wt w)| &\leq e^{-K_1t}(|f_i(u)| + K_1e^{K_1t}|w_i|)\\
	&\leq e^{-K_1t}(K(1+ |u|^{2+\varepsilon}) + K_1e^{K_1t}|w_i|)\\
	&\leq Ce^{(1+\varepsilon)K_1 T_{\max}}(1+|\wt{w}|^{2+\varepsilon}).
\end{aligned}
\end{equation}
Moreover, the nonlinearities of \eqref{Sw} satisfies the  condition \eqref{A3eq} (with $K_0$ is replaced by a continuous function in $t$), i.e.
\begin{equation}\label{Sw_conservation}
	\sum_{i=1}^{N+1}g_i(\wt w) = K_0e^{-K_1 t}
\end{equation}
thanks to \eqref{wm1}.

Now we can apply the results in Section \ref{sec:thm} (Remark \ref{remark:K0t}) to get that \eqref{Sw} has a global classical solution $\wt{w}$. Changing back to the original unknowns $u_i(x,t) = e^{K_1t}w_i(x,t)$ for $i=1,\ldots, N$, we obtain finally the global existence of classical solution to \eqref{eq1}. 

\medskip
Moreover, in case $K_0= 0$ and $K_1 < 0$, the growth condition \eqref{growth_new} can be estimated further as
\begin{equation*}
|g_i(\wt w)| \leq C(1+|\wt w|^{2+\varepsilon})
\end{equation*}
and \eqref{Sw_conservation} becomes
\begin{equation*}
\sum_{i=1}^{N+1}g_i(\wt w) = 0.
\end{equation*}
Therefore, from Section \ref{sec:thm}, we know that the solution $\wt w$ to \eqref{Sw} grows at most polynomially in time, i.e. for all $i=1,\ldots, N+1$
\begin{equation*}
	\|w_i(t)\|_{L^{\infty}(\Omega)} \leq C(1+t^{\zeta})
\end{equation*}
for some $C, \zeta>0$. Therefore, if $K_0 = 0$ and $K_1 < 0$ we get
\begin{equation*}
	\|u_i(t)\|_{L^\infty(\Omega)} = e^{K_1t}\|w_i(t)\|_{L^\infty(\Omega)} \leq C(1+ e^{K_1t}t^\zeta) \leq C(1+e^{-\mu t})
\end{equation*}
for some $K_1 < -\mu < 0 $, which completes the proof of Theorem \ref{thm:main}.
\end{proof}

	\section{Applications}\label{sec:appl}	
	\subsection{Quadratic reversible reactions}
	We consider the reversible chemical reaction
	\begin{equation*}
		A_1 + A_2 \leftrightarrows A_3 + A_4
	\end{equation*}
and denote by $u_i(x,t)$ the concentrations of the substances $A_i$ at $x\in\O$ and $t>0$. 
Using the mass-action law, we obtain the following reaction-diffusion system
	\begin{equation}\label{sys:quad}
		\begin{cases}
			\pa_t u_1 - d_1\Delta u_1 = -u_1u_2 + u_3u_4 =: f_1(u), &\quad(x,t)\in Q_T,\\
			\pa_t u_2 - d_2\Delta u_2 = -u_1u_2 + u_3u_4=: f_2(u), &\quad(x,t)\in Q_T,\\
			\pa_t u_3 - d_3\Delta u_3 = +u_1u_2- u_3u_4=: f_3(u), &\quad(x,t)\in Q_T,\\
			\pa_t u_4 - d_4\Delta u_4 = +u_1u_2- u_3u_4=: f_4(u), &\quad(x,t)\in Q_T,\\
			\nx u_i \cdot \nu = 0, &\quad(x,t)\in \partial\O\times (0,T),\\
			u_i(x,0) = u_{i0}(x), &\quad x\in\O,
		\end{cases}
	\end{equation}
where we have normalised the forward and backward reaction rate constants for the sake of simplicity. We assume positive diffusion coefficients $d_i >0$. System \eqref{sys:quad} was studied extensively in the literature: Weak solutions were shown in all dimensions \cite{DFPV07} while the global strong (classical) solutions were shown in \cite{GV10} or \cite{CDF14} in one or two dimensions. In higher dimensions, \cite{CDF14} showed global classical solutions under the assumption that the diffusion coefficients are sufficiently close to each other (depending on the space dimension). The question of global classical without restriction on diffusion coefficients was solved in \cite{Kan90} for the case $\Omega = \R^n$ and recently reproved in \cite{CGV} and \cite{Sou18}. We remark that besides satisfying Assumptions \eqref{A3}, \eqref{A4} and \eqref{A5}, the nonlinearities of \eqref{sys:quad} have an additional feature, that is an \textit{entropy inequality}, i.e.
	\begin{equation}\label{entropy}
		\sum_{i=1}^{4}f_i(u)\log u_i \leq 0 \quad \text{ for all } \quad u \in \R_+^4,
	\end{equation}
which  plays an important role in the analysis of \cite{CGV,Sou18}. This entropy property is also a key in studying the convergence to chemical equilibrium for \eqref{sys:quad}. Due to the homogeneous Neumann boundary condition, system \eqref{sys:quad} possesses precisely three linear independent conservation laws for all $t>0$
	\begin{equation*}
		\int_{\O}[u_1(x,t) + u_3(x,t)]dx = \int_{\O}[u_{10}(x) + u_{30}(x)]dx =: M_{13},
	\end{equation*}
	\begin{equation*}
		\int_{\O}[u_2(x,t)+u_3(x,t)]dx = \int_{\O}[u_{20}(x) + u_{30}(x)]dx =: M_{23},
	\end{equation*} 
	\begin{equation*}
	\int_{\O}[u_2(x,t)+u_4(x,t)]dx = \int_{\O}[u_{20}(x) + u_{40}(x)]dx =: M_{24}.
	\end{equation*}
	Denote by $\mb{M} = (M_{13}, M_{23}, M_{24})$ the initial mass vector. By straightforward computations, for any fixed positive initial mass vector $\mb{M}\in \R_+^3$, there exists a unique positive constant equilibrium $u_\infty = (u_{1\infty}, u_{2\infty}, u_{3\infty}, u_{4\infty}) \in \R_+^4$ satisfying
	\begin{equation*}
		u_{1\infty}u_{2\infty} = u_{3\infty}u_{4\infty}, \quad u_{1\infty} + u_{3\infty} = M_{13}, \quad u_{2\infty} + u_{3\infty} = M_{23}, \quad u_{2\infty} + u_{4\infty} = M_{24}.
	\end{equation*}
	Note that the equilibrium is determined only by the initial mass vector rather than the precise initial data. It was proved in several works, e.g. \cite{DF08,FT17,FT17a,PSZ16}, that any solution (renormalised, weak or strong) to \eqref{sys:quad} with initial mass $\mb{M}$ converges exponentially in $L^1$-norm to the equilibrium $u_\infty$ defined above, i.e.
	\begin{equation}\label{L1-converge}
		\sum_{i=1}^{4}\|u_i(t)-u_{i\infty}\|_{L^1(\O)} \leq Ce^{-\lambda t}, \qquad \text{ for all } \quad t\geq 0,
	\end{equation}
	where $C > 0, \lambda > 0$ are constants which can be computed explicitly. By applying Theorem \ref{thm:main}, 
	we have
	\begin{theorem}\label{thm:quad}
		Let $\Omega$ be a bounded domain with smooth boundary $\pa\O$. Fix a positive initial mass vector $\mb{M} \in \R_+^3$. 
		
		Then, for any non-negative initial data $u_0 = (u_{i0})_{i=1\ldots4} \in L^{\infty}(\O)^4$ having the initial mass vector $\mb{M}$, there exists a unique global non-negative classical solution $u = (u_1, u_2, u_3, u_4)$ 
of \eqref{sys:quad} which converges exponentially to equilibrium in $L^\infty$-norm, i.e.
		\begin{equation}\label{conv.Lp}
			\sum_{i=1}^4\|u_i(t) - u_{i\infty}\|_{L^\infty(\O)} \leq C_0e^{-\lambda_0 t}, \qquad \text{ for all } \quad t \geq 0,
		\end{equation}
		where $C_0 > 0, \lambda_0 >0$ are positive constants which can be computed explicitly.
	\end{theorem}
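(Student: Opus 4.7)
The proof decomposes naturally into two parts: using Theorem \ref{thm:main} to produce the global classical solution, and then combining the resulting polynomial-in-time $L^\infty$ bound with the known exponential $L^1$-convergence \eqref{L1-converge} to obtain \eqref{conv.Lp}.

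For the first part, I would verify that system \eqref{sys:quad} satisfies \eqref{A1}--\eqref{A5}. The smoothness of $\Omega$, the boundedness and non-negativity of the initial data, and the local Lipschitz property of the $f_i$ are clear. Quasi-positivity \eqref{A4} follows since $f_1(u)|_{u_1=0} = u_3 u_4 \geq 0$ and similarly for $i=2,3,4$, and the quadratic growth \eqref{A5} is trivial. Most importantly, direct summation yields $\sum_{i=1}^4 f_i(u) \equiv 0$, so \eqref{A3eq} (equivalently \eqref{A3} with $K_0 = K_1 = 0$) holds. Theorem \ref{thm:main}(i) then provides the unique global non-negative classical solution together with a polynomial bound $\|u_i(t)\|_{L^\infty(\Omega)} \leq L_0(1+t^{\xi})$ for some $L_0, \xi > 0$ and all $i = 1,\ldots,4$.

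For the convergence \eqref{conv.Lp}, my plan is to interpolate the exponential $L^1$-decay \eqref{L1-converge} against a polynomial-in-time bound of $u - u_\infty$ in a stronger norm. Since the $L^\infty$-norm of $u$ grows at most polynomially in $t$ and the nonlinearities are polynomial, parabolic Schauder estimates applied iteratively to the equations satisfied by spatial derivatives of $u_i$ (or, equivalently, a Duhamel/heat-semigroup bootstrap) yield
\begin{equation*}
\|u(t) - u_\infty\|_{C^k(\overline\Omega)} \leq \widetilde L_k(1+t^{\xi_k}), \qquad \text{for every }\; k\in\mathbb{N},\; t \geq 0,
\end{equation*}
with $\widetilde L_k, \xi_k$ depending only on $k$ and the data. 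A Gagliardo--Nirenberg interpolation of the form
\begin{equation*}
\|v\|_{L^\infty(\Omega)} \leq C\,\|v\|_{C^k(\overline\Omega)}^{n/(n+k)}\,\|v\|_{L^1(\Omega)}^{k/(n+k)}
\end{equation*}
applied to $v = u(t) - u_\infty$, combined with \eqref{L1-converge}, then gives
\begin{equation*}
\|u(t) - u_\infty\|_{L^\infty(\Omega)} \leq C\,(1+t^{\xi_k})^{n/(n+k)}\,e^{-\lambda k t/(n+k)}.
\end{equation*}
Choosing $k$ sufficiently large so that $\lambda k/(n+k)$ is as close to $\lambda$ as desired, and then absorbing the polynomial prefactor into the exponential via the elementary inequality $t^m e^{-\alpha t} \leq C_{m,\alpha,\beta}\,e^{-\beta t}$ valid for any $0 < \beta < \alpha$, produces \eqref{conv.Lp} with an explicit rate $\lambda_0 \in (0,\lambda)$.

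The main technical point is the polynomial-in-time bootstrap of higher spatial regularity with controlled (polynomial) dependence on $t$. However, this is a routine iteration of the ideas already used in Lemmas \ref{lem:gradvd}--\ref{lem:Lapvd}: the polynomial $L^\infty$ bound on $u$ gives a polynomial $L^\infty$ bound on each $f_i(u)$, and standard parabolic regularity propagates such bounds through successive derivatives; the Neumann boundary compatibility needed at each differentiation step is guaranteed by first shifting the initial time to a positive value as in Remark \ref{re_smooth}. No conceptual difficulty arises beyond book-keeping the polynomial dependence on $T$, and the cited $L^1$-convergence result handles the remaining ``hard'' part for free.
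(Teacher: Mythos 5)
Your proposal is correct, and the first half (verifying \eqref{A1}--\eqref{A5} with $\sum_i f_i\equiv 0$ and invoking Theorem~\ref{thm:main}(i)) coincides with the paper's. The second half, however, takes a genuinely different route. The paper never bootstraps higher spatial regularity: it first interpolates the polynomial $L^\infty$ bound against the exponential $L^1$ decay \eqref{L1-converge} to get exponential decay of $\|u_i(t)-u_{i\infty}\|_{L^p(\O)}$ for every finite $p$, then uses the $L^p$--$L^\infty$ smoothing of the heat semigroup together with Duhamel's formula and the quadratic growth of $f_i$ (choosing $p>n/2$) to obtain a uniform-in-time $L^\infty$ bound, and finally repeats the Duhamel argument for $u-u_\infty$ (using that $u_\infty$ is itself a solution and that $f_i$ is Lipschitz on bounded sets) to convert the $L^p$ decay into $L^\infty$ decay. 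Your route instead establishes polynomial-in-time $C^k$ bounds and interpolates them directly against \eqref{L1-converge} via Gagliardo--Nirenberg. Both work; what each buys is the following. The paper's argument uses only the $L^\infty$ bound already proved and avoids any Schauder bootstrap, but its final rate is $\lambda_p<\lambda/p$ with $p>n/2$, so it degrades with the dimension. Your argument recovers a rate arbitrarily close to the $L^1$ rate $\lambda$ for large $k$, at the price of the regularity bootstrap --- though note that for the statement as written any $\lambda_0>0$ suffices, so $k=1$ is enough, and a polynomial-in-time gradient bound is essentially already available from Lemma~\ref{lem:gradud} combined with the final polynomial bound on $|U|$; your attribution of the bootstrap to Lemmas~\ref{lem:gradvd}--\ref{lem:Lapvd} is a little loose (those concern $v_d$ and $u_d$), but the underlying claim that polynomial $L^\infty$ bounds propagate to polynomial $C^k$ bounds by local-in-time parabolic estimates on unit time intervals is standard and sound.
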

	\begin{remark}
		The global existence of classical solution to \eqref{sys:quad} on the whole space $\mathbb R^n$ was proved in \cite{Kan90} and recently reproved in \cite{CGV,Sou18}. However, it seems that the results therein do not provide \textit{a priori} estimates in time for solutions, thus such convergence in $L^\infty$-norm \eqref{conv.Lp} does not follow immediately from their results.
	\end{remark}
	\begin{remark}
		Results similar to Theorem~\ref{thm:quad} are also valid for complex balanced networks of chemical reactions of substances $A_1, \ldots, A_S$ with quadratic nonlinearities, which means that each complex of the networks is of the form $A_i + A_j$ with $i,j = 1,\ldots, S$. The global existence and uniqueness of classical solution follows immediately from Theorem \ref{thm:main} with \eqref{genA4} replacing \eqref{A3}, see Remark \ref{thm:gen}. For the exponential convergence we need either the detailed or complex balanced condition, and the absence of boundary equilibria, see \cite{CC17} and \cite{Tan17} for more details.
	\end{remark}	
	\begin{proof}[Proof of Theorem~\ref{thm:quad}]
		From Theorem \ref{thm:main}, we have that the $L^\infty$-norm,
		\begin{equation*}
			\|u_i(T)\|_{L^\infty(\Omega)} \leq C_T, \qquad \text{ for all }\quad T> 0,
		\end{equation*}
		grows at most polynomially in $T$. Now using \eqref{L1-converge} and the interpolation inequality 		\begin{equation*}
			\|f\|_{L^p(\Omega)} \leq \|f\|_{L^\infty(\Omega)}^{(p-1)/p}\|f\|_{L^1(\Omega)}^{1/p}
		\end{equation*}
		for all $p \in (1,\infty)$,
we have
		\begin{equation}\label{Lp-converge}
			\begin{aligned}
			\|u_i(T) - u_{i\infty}\|_{L^p(\Omega)} &\leq \|u_i(T) - u_{i\infty}\|_{L^\infty(\Omega)}^{(p-1)/p}\|u_i(T) - u_{i\infty}\|_{L^1(\Omega)}^{1/p}
			\leq C_T^{(p-1)/p}C^{1/p}e^{-(\lambda/p)t}\\
			&\leq C_pe^{-\lambda_p T}
			\end{aligned}
		\end{equation}
		for some explicit constants $C_p>0$ and $0 < \lambda_p < \lambda/p$. It follows that for each $p\in (1,\infty)$ there exists $M_p > 0$ such that
		\begin{equation*}
			\sup_{t>0}\|u_i(t)\|_{L^p(\Omega)} \leq M_p, \qquad \text{ for all }\quad i =1,\ldots, 4.
		\end{equation*}
		Let $S_i(t) = e^{d_i\Delta t}$ be the heat semigroup with Neumann boundary condition. We use the $L^p-L^\infty$ estimate
		\begin{equation*}
			\|S_i(t)f\|_{L^{\infty}(\Omega)} \leq Ct^{-n/(2p)}\|f\|_{L^p(\Omega)},
		\end{equation*}
		the Duhamel formula
		\begin{equation*}
			u_i(t+1) = S(1)u(t) + \int_{0}^{1}S(1-s)f_i(u(t+s))ds,
		\end{equation*}
		and the quadratic growth $|f_i(u)| \leq C|u|^2$, to estimate
		\begin{equation*}
			\begin{aligned}
			\|u_i(t+1)\|_{L^\infty(\Omega)} &\leq C\|u_i(t)\|_{L^p(\Omega)} + C\int_0^1(1-s)^{-n/(2p)}\|u(t+s)\|_{L^{2p}(\Omega)}^2ds\\
			&\leq CM_p + CM_{2p}^2 \int_0^1(1-s)^{-n/(2p)}ds.
			\end{aligned}
		\end{equation*}
		By choosing $p > n/2$ which leads to $\int_0^1(1-s)^{-n/(2p)}ds \leq C$, we get a uniform in time bound of the $L^\infty$-norm,
		\begin{equation*}
			\|u_i(t)\|_{L^\infty(\Omega)} \leq M_\infty, \qquad \text{ for all } \quad t> 0 \quad \text{ and } \quad i=1,\ldots, 4.
		\end{equation*}
To prove exponential convergence in the $L^\infty$-norm, we observe that the equilibrium $u_\infty$ also solves the system \eqref{sys:quad}. Therefore, repeating the previous argument, we get
		\begin{equation*}
			\begin{aligned}
				\|u_i(t) - u_{i\infty}\|_{L^\infty(\Omega)} &\leq C\|u_i(t) - u_{i\infty}\|_{L^p(\Omega)} \!+ C\!\int_0^1\!(1-s)^{-n/(2p)}\|f_i(u(t+s)) - f_i(u_\infty)\|_{L^p(\Omega)}ds.
			\end{aligned}
		\end{equation*}
		It follows from the uniform in time bound of $L^\infty$-norm that
		\begin{equation*}
			\|f_i(u) - f_i(u_\infty)\|_{L^p(\Omega)} \leq C\|u - u_\infty\|_{L^p(\Omega)}.
		\end{equation*}
		Hence
		\begin{equation*}
					\begin{aligned}
						\|u_i(t) - u_{i\infty}\|_{L^\infty(\Omega)} &\leq C\|u_i(t) - u_{i\infty}\|_{L^p(\Omega)} + C\int_0^1(1-s)^{-n/(2p)}\|u(t+s) - u_\infty\|_{L^p(\Omega)}ds\\
						&\leq CC_pe^{-\lambda_pt} + CC_p\int_0^1(1-s)^{-n/(2p)}e^{-\lambda_p(t+s)}ds\\
						&\leq Ce^{-\lambda_p t}\left[1 + \int_0^1(1-s)^{-n/(2p)}\right]\\
						&\leq Ce^{-\lambda_p t},
					\end{aligned}
				\end{equation*}
		where we choose $p > n/2$ at the last steps. This finishes the proof of Theorem \ref{thm:quad}.
		\end{proof}	
	\subsection{Skew-symmetric Lotka-Volterra systems}
	A general Lotka-Volterra system with diffusion is of the form \eqref{LV}, i.e. 
	\begin{equation*}
		\begin{cases}
		\pa_t {u}_i -  d_i\Delta {u}_i = \left(-\tau_i + \sum_{j=1}^{N}a_{ij}{u}_j\right){u}_i =: f_i(u), &(x,t)\in Q_T,\\
		\nx {u}_i \cdot \nu = 0, &(x,t)\in \pa\Omega\times (0,T),\\
		u_i(x,0) = u_{i0}(x), &x\in\O,
		\end{cases}
	\end{equation*}
	where $A = (a_{ij})\in \R^{N\times N}$ and $\tau = (\tau_1, \ldots, \tau_N)\in \R^N$. The system is called skew-symmetric when the matrix $A$ is skew-symmetric, i.e.
	\begin{equation*}
		A^{\top} + A = 0.
	\end{equation*}
	Lotka-Volterra systems of the form \eqref{LV} have been extensively investigated in the literature due to their wide application in biology, see e.g. \cite{Hen81,Rot84,LSY12,SY13}. In the case of skew-symmetric Lotka-Volterra systems, all existing results concern either weak solutions in all space dimensions or classical solutions in small (one and two) space dimensions. The results of our work imply classical solutions in all space dimensions. By the skew-symmetry of $A$, we obtain easily
	\begin{equation*}
		\sum_{i=1}^{N}f_i(u) = -\sum_{i=1}^N\tau_i u_i,
	\end{equation*}
	thus Assumption \eqref{A3} holds. It is straightforward that Assumptions \eqref{A4} and \eqref{A5} are also satisfied. We remark that \eqref{LV} does not possess an entropy inequality like \eqref{entropy}, and therefore the results of \cite{CGV} or \cite{Sou18} are not applicable in this case. By applying Theorem \ref{thm:main}, we have
	\begin{theorem}\label{thm:LV}
		Assume that $\O$ is a bounded domain with smooth boundary $\pa\O$. Assume moreover that the system \eqref{LV} is skew-symmetric, i.e. $A^\top + A = 0$. Then, for any non-negative initial data $u_{i0}\in L^\infty(\O)$, there exists a unique global classical solution to \eqref{LV}. Moreover, if $\tau_i > 0$ for all $i=1, \ldots, N$, we have
		\begin{equation*}
			\|u_i(t)\|_{L^\infty(\Omega)} \leq Ce^{-\mu t}
		\end{equation*}
		for some $\mu > 0$.
	\end{theorem}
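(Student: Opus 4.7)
The plan is to reduce Theorem~\ref{thm:LV} to a direct application of Theorem~\ref{thm:main}, the main point being that the skew-symmetry of $A$ furnishes the mass-control hypothesis \eqref{A3}, and that the sign of the $\tau_i$ then dictates the long-time behaviour.

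First I would verify \eqref{A4} and \eqref{A5}. Each $f_i(u) = \bigl(-\tau_i + \sum_j a_{ij} u_j\bigr) u_i$ is a polynomial and therefore locally Lipschitz on $\R^N$; it vanishes whenever $u_i = 0$, giving quasi-positivity. The pointwise bound $|f_i(u)| \leq |\tau_i|\,|u| + \|A\|\,|u|^2 \leq K(1+|u|^2)$ shows the nonlinearities have at most quadratic growth, so \eqref{A5} holds for any $\varepsilon > 0$.

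The essential step is to check \eqref{A3}, which is where skew-symmetry enters through the identity
\begin{equation*}
\sum_{i=1}^N f_i(u) \;=\; -\sum_{i=1}^N \tau_i u_i + \sum_{i,j=1}^N a_{ij} u_i u_j \;=\; -\sum_{i=1}^N \tau_i u_i,
\end{equation*}
since the quadratic form vanishes identically, $\sum_{i,j} a_{ij} u_i u_j = \tfrac{1}{2}\sum_{i,j}(a_{ij}+a_{ji}) u_i u_j = 0$, by $A^{\top} + A = 0$. Setting $K_0 := 0$ and $K_1 := \max_{i}(-\tau_i)$ yields $\sum_i f_i(u) \leq K_1 \sum_i u_i$ on $\R^N_+$, so \eqref{A3} holds. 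Theorem~\ref{thm:main} then immediately gives a unique global classical solution.

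For the exponential-decay statement, when $\tau_i > 0$ for all $i$, the constant $K_1 = -\min_i \tau_i$ is strictly negative while $K_0 = 0$. Part~(ii) of Theorem~\ref{thm:main} therefore applies and yields $\|u_i(t)\|_{L^\infty(\Omega)} \leq L_1 e^{-\mu t}$ for some $\mu > 0$, which is the claim. There is no genuine obstacle: the only nontrivial ingredient is the cancellation of the quadratic form by skew-symmetry, and the rest is a bookkeeping verification of the hypotheses of Theorem~\ref{thm:main}. Notably, no entropy structure of the form \eqref{en_dis} is used, which is why results such as those of \cite{CGV,Sou18} do not cover this case.
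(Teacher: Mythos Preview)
Your proposal is correct and follows essentially the same route as the paper: verify \eqref{A4} and \eqref{A5} directly, use skew-symmetry to cancel the quadratic form and obtain $\sum_i f_i(u) = -\sum_i \tau_i u_i$, and then invoke Theorem~\ref{thm:main} with $K_0=0$ and $K_1 = -\min_i \tau_i$ (equivalently your $\max_i(-\tau_i)$), appealing to part~(ii) for the exponential decay when all $\tau_i>0$. The paper's proof is in fact terser than yours but identical in substance.
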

	\begin{remark}
		Obviously, Theorem \ref{thm:LV} is also applicable in the case of ``sub-skew-symmetric" system, i.e. for $A + A^\top \leq 0$ in the sense that all elements of $A + A^\top$ are non-positive.
	\end{remark}
	\begin{proof}
		The global existence and uniqueness of classical solution follows from Theorem \ref{thm:main}. If $\tau_i > 0$ for all $i=1,\ldots, N$, we can take $K_1 = -\min\{\tau_1, \ldots, \tau_N\} < 0$ which implies
		\begin{equation*}
			\sum_{i=1}^Nf_i(u) = -\sum_{i=1}^N\tau_i u_i \leq K_1\sum_{i=1}^Nu_i
		\end{equation*}
		and therefore Theorem \ref{thm:main} directly applies yielding the exponential decay to zero. 
	\end{proof}
\appendix
\addtocontents{toc}{\protect\setcounter{tocdepth}{1}}
\section{Proof of H\"older continuity for $\wh u$}\label{appendix}
%
	


In order to show H\"older continuity of $\wh u$ in Lemma \ref{lem:uhat}, we follow the approach from \cite[Chapter 2]{LSU88}. We remark that this approach gives only local H\"older continuity which is only sufficient in the case $\Omega = \mathbb R^n$. In the case of bounded domains $\Omega \subset \mathbb R^n$, we need to also prove boundary regularity. This will be done by reflection techniques, which was used e.g. in \cite{Nit11} for elliptic equations.

\subsection{Local H\"older continuity}\label{sec:loHo}\hfill\\
We fix $(x_0, t_0)\in Q_T$ and denote $$B_\varrho = \{x\in \Omega: |x-x_0| < \varrho\}$$ and $$Q(\varrho,\tau) = B_\varrho \times (t_0-\tau,t_0) =  \{(x,t): x\in B_\varrho, t_0 - \tau < t < t_0\}.$$

\begin{lemma}[Local H\"older continuity]\label{Holder-local}\hfill\\
For any $t_0>0$ and $\wh{\O} \subset\subset \O$, there exists $\rho_0>0$  with $d(\wh{\O}, \pa\O) \geq \rho_0$, and a constant $\alpha \in (0,1)$, such that 	for some constant $C>0$
	\begin{equation*}
		|\wh{u}(x,t) - \wh{u}(x', t')| \leq C\|\wh u\|_{L^\infty(Q_T)}(|x - x'|^{\alpha} + |t - t'|^{\alpha/2}),  \qquad \text{ for all } \quad (x,t), (x', t') \in \wh{\O}\times [t_0, T].
	\end{equation*}
\end{lemma}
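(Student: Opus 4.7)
The plan is to interpret $\wh u$ as a solution of the parabolic equation
\[ b(x,t)\,\pa_t \wh u - \Delta \wh u = F(x,t) \qquad \text{in } Q_T, \]
with $F := \sum_{i=1}^N u_{i0} + K_0 t$ bounded on $Q_T$ and $b$ a measurable coefficient satisfying the two-sided bound $b_* \leq b(x,t) \leq b^*$, where $b_* := 1/\max_i d_i$ and $b^* := 1/\min_i d_i$. I would then apply the classical De Giorgi--Nash--Moser iteration scheme for linear parabolic equations with bounded measurable coefficients, following \cite[Chapter II]{LSU88}. The main technical difficulty I expect is precisely the handling of the bounded measurable time-coefficient $b$: since $\pa_t b$ is not controlled, the usual divergence-form De Giorgi arguments that integrate by parts in $t$ are not directly applicable, and I must rely solely on the pointwise bounds on $b$.

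The first step is a Caccioppoli-type energy estimate on interior parabolic cylinders $Q(\rho,\rho^2) = B_\rho(x_0)\times(t_*-\rho^2, t_*)$ compactly contained in $\wh\O\times(t_0,T]$, with $\rho \leq \rho_0 := \tfrac12\operatorname{dist}(\wh\O,\pa\O)$. For a level $k\in\R$ and a product cutoff $\chi(x,t)=\zeta(x)\eta(t)$ supported in $Q(\rho,\rho^2)$ with $\eta$ vanishing at the bottom of the cylinder, I would test the equation against $(\wh u - k)^\pm \chi^2$. The parabolic term then takes the form $\tfrac12\iint b\,\pa_t[(\wh u-k)^\pm]^2\chi^2$; bounding it using only the pointwise inequalities $b_*\le b\le b^*$ (thereby avoiding any integration by parts that would introduce $\pa_t b$) and combining with the standard manipulation of the diffusive term should yield
\[ \sup_t \int_{B_\rho}[(\wh u-k)^\pm]^2\chi^2\,dx + \iint_{Q(\rho,\rho^2)} |\nabla(\wh u-k)^\pm|^2\chi^2\,dx\,dt \leq C\iint \Bigl([(\wh u-k)^\pm]^2(|\nabla\chi|^2 + \chi|\pa_t\chi|) + \|F\|_\infty^2\,\mathbf{1}_{\{(\wh u-k)^\pm>0\}}\chi^2\Bigr)dx\,dt, \]
with $C$ depending only on $b_*, b^*$ and the dimension.

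The second step is the De Giorgi iteration itself. Combining the Caccioppoli estimate with the parabolic Sobolev embedding and iterating along a geometric sequence of truncation levels yields a local $L^\infty$-bound on $\wh u$ (already available from \eqref{Linfbound}) together with a measure-theoretic alternative on parabolic sub-cylinders; from this alternative one extracts the classical oscillation-decay inequality
\[ \operatorname*{osc}_{Q(\rho/2,\rho^2/4)} \wh u \leq (1-\theta)\operatorname*{osc}_{Q(\rho,\rho^2)} \wh u + C\rho^2\|F\|_{L^\infty(Q_T)}, \]
for some $\theta\in(0,1)$ and $C>0$ depending only on $n, b_*, b^*$. Iterating this decay along $\rho_k=2^{-k}\rho_0$ finally produces H\"older continuity on $\wh\O\times[t_0,T]$ with exponent $\alpha = \log_2\bigl(1/(1-\theta)\bigr)\in(0,1)$ and constant proportional to $\|\wh u\|_{L^\infty(Q_T)}$, as claimed. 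Note that this strategy provides only \emph{interior} H\"older continuity, so the complementary boundary regularity needed to complete the proof of Lemma~\ref{lem:uhat} in the bounded-domain case will be handled separately by the reflection argument in the spirit of \cite{Nit11} announced in the appendix.
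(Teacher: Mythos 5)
Your overall strategy is the same as the paper's: a Caccioppoli-type energy estimate for truncations of $\wh u$ obtained by testing against $(\wh u-k)^{\pm}\xi^2$, followed by the De Giorgi machinery of \cite[Chapter 2, Section 7]{LSU88} to pass from the energy estimate and the $L^\infty$ bound to an oscillation decay and hence to interior H\"older continuity (this is exactly Lemma \ref{lem:ener} plus the citation of LSU in the paper). However, there is a genuine gap at the one step you flag as the main difficulty. You claim that the term $\tfrac12\iint b\,\pa_t[(\wh u-k)^\pm]^2\chi^2$ can be controlled ``using only the pointwise inequalities $b_*\le b\le b^*$'', without any integration by parts in $t$. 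For a general bounded measurable $b$ this is false: to produce the term $\sup_t\int[(\wh u-k)^\pm]^2\chi^2$ one must integrate by parts in time, and one can only pull the measurable factor $b$ out beforehand if the integrand $\pa_t[(\wh u-k)^\pm]^2$ has a fixed sign. If it changes sign, taking $b=b_*$ where the integrand is positive and $b=b^*$ where it is negative makes $\iint b\,\pa_t[(\wh u-k)^+]^2\chi^2$ arbitrarily negative while $\int[(\wh u-k)^+(t)]^2\chi^2(t)\,dx$ stays large, so no Caccioppoli inequality of the asserted form can hold. Indeed, for merely measurable $b$ the equation $b\,\pa_t\wh u-\Delta\wh u=F$ is genuinely of non-divergence form with measurable coefficients, and its H\"older theory is Krylov--Safonov \cite{KS80}, not De Giorgi --- precisely the point made in the remark following Lemma \ref{lem:uhat}.

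What rescues the De Giorgi route here is a structural feature of $\wh u$ that you never invoke: by the definition \eqref{uhat_def} and the nonnegativity of the $u_i$, one has $\pa_t\wh u=\sum_{i=1}^N d_iu_i\ge 0$. Hence $\pa_t[(\wh u-k)_+]^2=2(\wh u-k)_+\,\pa_t\wh u\ge0$ pointwise, so $b\,\pa_t[(\wh u-k)_+]^2\ge b_*\,\pa_t[(\wh u-k)_+]^2$, and only \emph{after} this replacement does one integrate the $b$-free expression by parts in time; this is estimate \eqref{mul1} in the paper's proof of Lemma \ref{lem:ener} (for the lower truncation the sign reverses and one uses $b\le b^*$ instead). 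Once you insert this monotonicity observation, your argument closes; the remaining steps (parabolic Sobolev embedding, level-set iteration, oscillation decay with exponent $\alpha$, and deferring boundary regularity to the reflection argument) agree with the paper.
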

\begin{proof}
According to \cite[Chapter 2, Section 7]{LSU88}, the H\"older continuity of $\wh u$ follows from the boundedness $\|\wh u\|_{L^\infty(Q_T)} \leq CT$ and the energy estimate of the following Lemma \ref{lem:ener}. 
\end{proof}
\begin{remark}
As an alternative proof of local H\"older continuity, note that the energy estimates \eqref{local-en} are sufficient to repeat the arguments in the proof of \cite[Proposition 3.1]{CGV}.
\end{remark}

\begin{lemma}[Local energy estimates]\label{lem:ener}\hfill\\
For any $k>0$, any $0 < \tau_2 < \tau_1 < 1$ and $0 < \varrho_2 < \varrho_1 < 1$ such that $Q(\varrho_1, \tau_1)\subset Q_T$, the following energy estimate holds
\begin{multline}\label{local-en}
	\sup_{t_0-\tau_2 < t < t_0}\|(\wh u-k)_+\|_{L^2(B_{\varrho_2})}^2 + \int_{t_0-\tau_2}^{t_0}\|(\wh u-k)_+\|_{H^1(B_{\varrho_2})}^2ds\\
	\leq C\left[((\varrho_1-\rho_2)^{-2} + (\tau_1 - \tau_2)^{-1})\|(\wh u -k)_+\|_{L^2(Q(\varrho_1,\tau_1))}^2 + \int_{Q(\varrho_1,\tau_1)}\chi_{\{\wh u>k\}}dxds\right].
\end{multline}
\end{lemma}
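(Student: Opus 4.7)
This is a Caccioppoli-type energy inequality for equation \eqref{uhat}, viewed as $b\,\pa_t \wh u - \Delta \wh u = g$ with bounded source $g(x,t) := \sum_{i=1}^N u_{i0}(x) + K_0 t \in L^\infty(Q_T)$ and coefficient $b$ satisfying the two-sided bound \eqref{bbound}. I would first fix a smooth cutoff $\eta(x,t)$ with $0 \leq \eta \leq 1$, $\eta \equiv 1$ on $Q(\varrho_2,\tau_2)$, support contained in $\overline{B_{\varrho_1}} \times [t_0 - \tau_1, t_0]$, vanishing near $s = t_0 - \tau_1$, and with the standard gradient bounds $|\nx \eta| \leq C/(\varrho_1-\varrho_2)$ and $|\pa_t \eta| \leq C/(\tau_1-\tau_2)$. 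The plan is then to test \eqref{uhat} against $\eta^2 (\wh u - k)_+$ and integrate over $B_{\varrho_1} \times (t_0-\tau_1, \bar t)$ for arbitrary $\bar t \in (t_0-\tau_2, t_0)$.

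Writing $w := (\wh u - k)_+$, the spatial term is handled by integration by parts and Young's inequality, giving $-\int \Delta \wh u \cdot \eta^2 w \geq \tfrac{1}{2}\int \eta^2 |\nx w|^2 - 2\int |\nx \eta|^2 w^2$, while the bounded source is controlled by $|g\eta^2 w| \leq \|g\|_{L^\infty(Q_T)} \eta^2 w \chi_{\{\wh u > k\}}$ followed by $2ab \leq a^2 + b^2$ to produce the splitting $\tfrac{1}{2}\int \eta^2 w^2 + C\int \eta^2\chi_{\{\wh u > k\}}$. For the time-derivative term, the pointwise identity $w\,\pa_t w = \pa_t(w^2/2)$ on $\{\wh u > k\}$ together with integration by parts in $s$ yields a good boundary contribution $\tfrac{1}{2}\int_{B_{\varrho_1}} b(\bar t)\eta(\bar t)^2 w(\bar t)^2\,dx$ (the analogous term at $s = t_0 - \tau_1$ vanishing by the cutoff) plus a remainder bounded by $C\int \eta|\pa_s \eta|\,w^2$. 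The bounds \eqref{bbound} then let me control these uniformly: the boundary contribution from below by $\tfrac{1}{2\max\{d_i\}}\int_{B_{\varrho_1}} \eta(\bar t)^2 w(\bar t)^2\,dx$, and the remainder from above using only the $L^\infty$ upper bound on $b$. Collecting everything, exploiting $\eta \equiv 1$ on $Q(\varrho_2,\tau_2)$ for both terms on the left-hand side, and finally taking the supremum over $\bar t \in (t_0-\tau_2, t_0)$ (the right-hand side being independent of $\bar t$) produces \eqref{local-en}.

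\textbf{Main obstacle.} The only delicate point is the time integration by parts in the presence of the merely $L^\infty$ coefficient $b(x,t)$, for which $\pa_s b$ need not exist as a function. I would circumvent this by first replacing $\wh u$ by its Steklov average $\wh u_h(x,t) := h^{-1}\int_t^{t+h}\wh u(x,s)\,ds$, performing the above calculation on the averaged equation in which $\pa_t \wh u_h$ is a genuine $L^2$ function and no derivative of $b$ ever needs to be taken after the rearrangement, and finally passing to the limit $h \to 0^+$. This Steklov-averaging device is a staple of the parabolic De Giorgi--Nash--Moser theory, see for instance \cite[Chapter II, \S 6]{LSU88}, and requires only the $L^\infty$ bounds \eqref{bbound} to close the estimate.
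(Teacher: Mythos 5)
Your treatment of the elliptic term, the source term, and the cutoff is exactly what the paper does, but there is a genuine gap in the time-derivative term, which is the only delicate point of the lemma. After writing $\int b\,\eta^2\,\pa_s(w^2/2)$ and integrating by parts in $s$, the remainder is $-\tfrac12\iint \pa_s(b\,\eta^2)\,w^2$, which contains $-\tfrac12\iint (\pa_s b)\,\eta^2 w^2$ in addition to the harmless $-\iint b\,\eta\,(\pa_s\eta)\,w^2$. The bound \eqref{bbound} gives no control whatsoever on $\pa_s b$ (which involves $\pa_t u_i$, i.e.\ exactly the quantities one is trying to estimate), so your claim that the remainder is bounded by $C\iint\eta|\pa_s\eta|\,w^2$ ``using only the $L^\infty$ upper bound on $b$'' does not hold. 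Steklov averaging does not repair this: it regularises $\pa_t\wh u$, but the obstruction is the time derivative falling on $b$, not on $\wh u$; moreover the averaging device is tailored to equations in which $\pa_t u$ carries a constant coefficient, whereas here $(b\,\pa_t\wh u)_h \neq b_h\,\pa_t\wh u_h$. (It is also unnecessary: by Remark \ref{re_smooth} the $u_i$ are smooth, hence so is $\wh u$.)

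The missing idea is the monotonicity of $\wh u$ in time. By \eqref{uhat_def}, $\pa_t\wh u = \sum_i d_i u_i \geq 0$, so the integrand $b\,\pa_t\wh u\,(\wh u - k)_+\,\xi^2$ is pointwise nonnegative and can be bounded from below by $\nu\,\pa_t\wh u\,(\wh u-k)_+\,\xi^2$ with the \emph{constant} $\nu = 1/\max\{d_i\}$ \emph{before} any integration by parts. One then integrates by parts the constant-coefficient quantity $\tfrac{\nu}{2}\iint\pa_s\bigl((\wh u - k)_+^2\bigr)\xi^2$, which produces only the good boundary term $\tfrac{\nu}{2}\int_{B_{\varrho_2}}(\wh u(t)-k)_+^2\,dx$ and the controlled remainder $\nu\iint(\wh u-k)_+^2\,\xi\,|\pa_s\xi|$; no derivative of $b$ ever appears. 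This is precisely the paper's step \eqref{mul1}, and it is the structural reason the De Giorgi energy estimate closes for this particular non-divergence equation. With that replacement, the rest of your argument goes through as written.
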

\begin{proof} 
Let $\xi: Q_T \to [0,1]$ be a smooth cut-off function such that
		\begin{equation*}
			\xi(x,t) = \begin{cases}
				1 &\text{ if } \quad (x,t) \in Q(\vr_2, \tau_2)\\
				0 &\text{ if } \quad (x,t) \not\in Q(\vr_1, \tau_1).
			\end{cases}
		\end{equation*}
For simplicity we denote by $u^{(k)} = (\wh u-k)_+$. By multiplying equation \eqref{uhat} with $u^{(k)}\xi^2$ and integrating over $B_{\vr_1}\times (t_0-\tau_1, t)$, $t_0 - \tau_2 < t < t_0$ and by integration by parts, we calculate
		\begin{multline}\label{mul}
				\int_{t_0-\tau_1}^t\int_{B_{\vr_1}}b\pa_t\wh u u^{(k)}\xi^2dx ds + \int_{t_0-\tau_1}^t\int_{B_{\vr_1}}|\nx u^{(k)}|^2\xi^2dxds\\
				= -2\int_{t_0-\tau_1}^t\int_{B_{\vr_1}} (\nx u^{(k)}\cdot \nx \xi) u^{(k)}\xi dxds + \int_{t_0-\tau_1}^t\int_{B_{\vr_1}}[u_0 + K_0s]u^{(k)}\xi^2dxds
		\end{multline}
		where we denote by $u_0 := \sum_{i=1}^{N}u_{i0}$. Since $ \pa_t \wh{u} = \sum_{i=1}^{N}d_iu_i \geq 0$, due to \eqref{uhat_def}, and thus
		\begin{equation*}
			\frac{1}{\max d_i}\pa_t \wh u \leq b\pa_t \wh u \leq \frac{1}{\min d_i}\pa_t \wh u.
		\end{equation*}
		Therefore, with $\nu = \frac{1}{\max d_i}$, we estimate
		\begin{equation}\label{mul1}
			\begin{aligned}
			\int_{t_0-\tau_1}^t\int_{B_{\vr_1}}b\pa_t\wh u u^{(k)}\xi^2dx ds&\geq \nu\int_{t_0-\tau_1}^t\int_{B_{\vr_1}}\pa_t \wh u u^{(k)}\xi^2dxds\\
			&= \frac{\nu}{2}\int_{B_{\vr_1}}(u^{(k)}(t))^2\xi(t)^2dx - \nu\int_{t_0-\tau_1}^t\int_{B_{\vr_1}}(u^{(k)})^2\xi\pa_t\xi dxds\\
			&\geq \frac{\nu}{2}\int_{B_{\vr_2}}|u^{(k)}(t)|^2dx - \nu\int_{t_0-\tau_1}^t\int_{B_{\vr_1}}(u^{(k)})^2\xi\pa_t\xi dxds\\
			&\geq \frac{\nu}{2}\int_{B_{\vr_2}}|u^{(k)}(t)|^2dx - \nu \int_{Q(\vr_1,\tau_1)}|u^{(k)}|^2|\xi_t|dxds,
			\end{aligned}
		\end{equation}
		where we used $\xi(\cdot, t_0-\tau_1) = 0$ at second step and $\xi|_{Q(\rho_2,\tau_2)} \equiv 1$ at the third step. By applying Cauchy-Schwarz's inequality, we get
		\begin{equation}\label{mul2}
			\begin{aligned}
			-2\int_{t_0-\tau_1}^t&\int_{B_{\vr_1}}(\nx u^{(k)}\cdot\nx\xi) u^{(k)}\xi dxds \\
			&\leq  \frac 12 \int_{t_0-\tau_1}^t\int_{B_{\vr_1}}|\nx u^{(k)}|^2\xi^2dxds+ 2\int_{t_0-\tau_1}^t\int_{B_{\vr_1}}|u^{(k)}|^2|\nx \xi|^2dxds\\
			&\leq  \frac 12 \int_{t_0-\tau_1}^t\int_{B_{\vr_1}}|\nx u^{(k)}|^2\xi^2dxds+ 2\int_{Q(\vr_1,\tau_1)}|u^{(k)}|^2|\nx \xi|^2dxds.
			\end{aligned}
		\end{equation}
Denoting by $\chi_{\{\wh u > k \}}$ the characteristic function of the set $\{(x,t): \wh{u}(x,t) > k\}$, 
		we can estimate
		\begin{equation}\label{mul3}
			\begin{aligned}
			&\int_{t_0-\tau_1}^t\int_{B_{\vr_1}}[u_0 + K_0s] u^{(k)}\xi^2dxds\\
			&= \int_{t_0-\tau_1}^t\int_{B_{\vr_1}}[u_0 + K_0s]u^{(k)}\xi^2 \chi_{\{\wh u > k\}}dxds \\
			&\leq \frac 12\int_{t_0-\tau_1}^t\int_{B_{\vr_1}}|u^{(k)}|^2\xi^2dxds + \frac 12 \int_{t_0-\tau_1}^t\int_{B_{\vr_1}}|u_0 + K_0s|^2\xi^2\chi_{\{\wh u > k\}}dxds\\
			&\leq \max\left\{1/2; \|u_0\|_{L^\infty(\O)}^2 + K_0^2t^2 \right\}\int_{Q(\vr_1,\tau_1)}\left(|u^{(k)}|^2 + \chi_{\{\wh u > k \}}\right)ds,
			\end{aligned}
		\end{equation}
		where we have used $|\xi| \leq 1$ in the last step. 		We now insert the estimates \eqref{mul1}, \eqref{mul2} and \eqref{mul3} into \eqref{mul} to get for all $t\in (t_0-\tau_2, t_0)$
		\begin{multline}\label{est}
			\frac{\nu}{2}\|u^{(k)}(t)\|_{L^2(B_{\vr_2})}^2 + \frac{1}{2}\int_{t_0-\tau_1}^t\int_{B_{\vr_1}}|\nx u^{(k)}|^2\xi^2dxds\\
			\leq \max\{\nu;2;\|u_0\|_{L^\infty(\O)}^2 + K_0^2t^2 \}\left[\int_{Q(\vr_1,\tau_1)}\left(|u^{(k)}|^2(|\nx \xi|^2 + |\pa_t\xi| + 1) + \chi_{\{\wh u > k\}}\right)dxds\right].
		\end{multline}
By adding the inequality $\frac 12 \int_{t_0-\tau_1}^{t}\int_{B_{\vr_1}}|u^{(k)}|^2\xi^2dxds \leq \frac 12 \int_{Q(\vr_1,\tau_1)}|u^{(k)}|^2dxds$ and taking the supremum over $t\in (t_0 - \tau_2, t_0)$, we have with $\tau_2 < \tau_1$ 
		\begin{multline}\label{est1}
			\frac{\min\{\nu,1\}}{2}\left(\sup_{t_0 - \tau_2 < t < t_0}\|u^{(k)}(t)\|_{L^2(B_{\vr_2})}^2 + \int_{t_0 - \tau_2}^{t_0}\|u^{(k)}\|_{H^1(B_{\vr_2})}^2ds\right)\\
			\leq \max\{\nu;2;\|u_0\|_{L^\infty(\O)}^2 + K_0^2t^2\} \left[\int_{Q(\vr_1,\tau_1)}\left(|u^{(k)}|^2(|\nx \xi|^2 + |\pa_t\xi| + 1) + \chi_{\{\wh u > k\}}\right)ds\right]
		\end{multline}
Finally, due to the definition of the cut-off function $\xi$, there exists a constant $C\geq 1$ independent of $\vr_i$ and $\tau_i$ such that $|\nx \xi| \leq C(\vr_1-\vr_2)^{-1}$ and $|\pa_t \xi| \le C(\tau_1 - \tau_2)^{-1}$. Noting also $1 \leq (\tau_1 - \tau_2)^{-1}$ since $\tau_1, \tau_2 \in (0,1)$, we get from \eqref{est1} the energy estimate \eqref{local-en}.
\end{proof}

\subsection{Boundary H\"older continuity}\hfill\\
For this part, we only need $\pa\O$ to be Lipschitz continuous. The proof in this part follows from \cite{Nit11}, where the author applied it to elliptic equations. For any $z\in \pa\O$, we can find an orthogonal matrix $\mathcal O$ and a Lipschitz continuous function $\psi: \R^{n-1} \to \R$ and
\begin{equation*}
	G:= \{(y, \psi(y)+s): y\in B(0,r), s\in (-r,r) \}
\end{equation*}
such that
\begin{equation*}
	\mathcal O (\O - z) \cap G = \{(y, \psi(y)+s): y\in B(0,r), s\in (0,r)\}.
\end{equation*}
Without loss of generality, we can assume that $z = 0$ and $\mathcal O = \mathbb I_d$.
Define the mapping $T: B(0,r)\times (-r,r) \mapsto G$ by $T(y,s) = (y, \psi(y)+s)$, then $T$ is bi-Lipschitz continuous. Using the mapping $S: G \mapsto G$ with $S(T(y,s)) = T(y,-s)$ we can define the reflection mapping of any function $w: (D\subset G)\times (0,T) \mapsto \R$ by
\begin{equation*}
	\wt{w}(x,t):= \begin{cases}
		w(x,t) &\text{ if } x\in G\cap \O,\\
		w(Sx, t) &\text{ if } x\in G \setminus \overline{\O}.
	\end{cases}
\end{equation*}
{\color{black}
\begin{lemma}[H\"older continuity at boundary]\label{Holder-boundary}\hfill\\
	Let $\wh{u}$ solve equation \eqref{uhat}. Then $\wt{u}:= \wt{\wh{u}}$ is the weak solution to
	\begin{equation}\label{reflect}
		\wt{b} \pa_t \wt{u} - \mathrm{div}(A(x)\nabla \wt{u}) = \wt{u}_0 + K_0t
	\end{equation}
	where $A(x) \in \mathbb R^{n\times n}$ is uniformly bounded and positive definite for all $x\in G$, i.e. $\|A(x)\|\leq C$ and there exists $\lambda > 0$ such that $\xi^\top A(x) \xi \geq \lambda |\xi|^2$ for all $x\in G$ and $\xi \in \mathbb R^n$. Therefore,
	$\wt{u} \in C^{\alpha}(\wh{G}\times [t_0,T])$ for any $t_0\in (0,T)$ and $\wh{G} \subset \subset G$. Moreover, 
	\begin{equation*}
		\|\wt{u}\|_{C^{\alpha}(\wh{G}\times [t_0,T])} \leq C_\delta\|\wh{u}\|_{L^\infty(Q_T)}
	\end{equation*}
	where $C_\delta$ depends on $\delta = \mathrm{dist}(\wt{G},\pa G)$.
	Consequently,
	\begin{equation*}
		\|\wh{u}\|_{C^{\alpha}(\wh{G}\cap \O\times [t_0,T])} \leq C_\delta\|\wh{u}\|_{L^\infty(Q_T)}.
	\end{equation*}
\end{lemma}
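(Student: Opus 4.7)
The plan is to straighten the boundary locally, reflect across it, verify that the reflected function solves a divergence-form parabolic equation with bounded, measurable, uniformly elliptic coefficients, and then invoke classical De Giorgi–Nash–Moser type interior Hölder estimates to transfer interior regularity of the reflected problem into boundary regularity of the original.

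First, I would pull back the equation \eqref{uhat} by the bi-Lipschitz map $T(y,s) = (y,\psi(y)+s)$, which flattens $\pa\Omega\cap G$ to $\{s=0\}$. In the new coordinates, $\Delta \wh{u}$ transforms into $\mathrm{div}(A_0(y,s)\nabla u)$ for a matrix $A_0$ built from $DT$ and its inverse; since $\psi$ is Lipschitz, $A_0\in L^\infty$ and is uniformly elliptic on $B(0,r)\times (0,r)$, and the coefficient in front of $\pa_t$ becomes $b \, |\mathrm{det}\,DT|$, still bounded above and below by positive constants by \eqref{bbound}. The homogeneous Neumann condition $\nx\wh{u}\cdot\nu = 0$ becomes a conormal condition $A_0\nabla u\cdot e_n = 0$ on $\{s=0\}$ in the weak sense. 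The cleanest approach is to work directly from the weak formulation: test \eqref{uhat} with $\varphi\circ T^{-1}$ for any smooth $\varphi$ supported in $G$, and observe that the Neumann boundary term vanishes.

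Second, I would define the reflection $\wt u$ as in the statement, together with a reflected matrix $\wt A$ (obtained by conjugating $A_0$ with the reflection matrix $R=\mathrm{diag}(1,\ldots,1,-1)$ across $\{s=0\}$) and $\wt b = \wt{b\,|\mathrm{det}\,DT|}$. The key verification, which is the main obstacle, is that for any test function $\varphi\in C^\infty_c(G\times(0,T))$, the identity
\begin{equation*}
\int\!\!\int \wt b\,\pa_t\wt u\,\varphi\,dxdt + \int\!\!\int \wt A\nabla\wt u\cdot\nabla \varphi\,dxdt = \int\!\!\int(\wt{u_0} + K_0 t)\varphi\,dxdt
\end{equation*}
holds. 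Splitting the integral into $\{s>0\}$ and $\{s<0\}$, symmetry reduces the claim to a single weak identity on each side plus the cancellation of boundary contributions on $\{s=0\}$. Decomposing $\varphi = \varphi_e + \varphi_o$ into its even and odd parts in $s$, the even part pairs with the original equation for $\wh{u}$ (whose Neumann/conormal boundary term vanishes), while the odd part contributes no boundary term because $\wt u$ is even and $\wt A$ is constructed so that $\wt A\nabla \wt u\cdot e_n$ is odd in $s$ and thus vanishes at $s=0$. This gives \eqref{reflect} in the weak sense. Checking the ellipticity and boundedness of $\wt A$ is immediate from those of $A_0$ and orthogonality of $R$.

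Third, once $\wt u$ is a bounded weak solution of a divergence-form parabolic equation with bounded measurable coefficients and a bounded right-hand side (use $\|\wh u\|_{L^\infty(Q_T)}\le d(M+K_0T)T$ from \eqref{Linfbound}), I can apply the classical interior parabolic Hölder estimate (e.g.\ \cite[Ch.~II, \S7]{LSU88}; alternatively the local energy estimates proved in Lemma \ref{lem:ener} go through verbatim for \eqref{reflect} and feed into the same De Giorgi iteration used in Lemma \ref{Holder-local}). This yields $\wt u\in C^\alpha(\wh G\times [t_0,T])$ with norm controlled by $C_\delta \|\wh u\|_{L^\infty(Q_T)}$, where $\delta=\mathrm{dist}(\wh G,\pa G)$. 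Since $\wt u=\wh u$ on $\wh G\cap \Omega$, the stated boundary Hölder estimate for $\wh u$ follows. Combining with Lemma \ref{Holder-local} via a finite covering of $\overline\Omega$ by interior and boundary neighbourhoods then gives the global estimate \eqref{uhat_Holder} required in Lemma \ref{lem:uhat}.
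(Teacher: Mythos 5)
Your plan follows essentially the same route as the paper: straighten and reflect across the Lipschitz boundary, verify that the reflected function is a weak solution of a uniformly elliptic divergence-form equation with bounded measurable coefficients (the paper delegates this verification to Lemmas 3.3 and 3.5 of \cite{Nit11}, whereas you spell out the even/odd test-function cancellation), and then rerun the interior energy estimates and De Giorgi iteration of Lemma \ref{Holder-local} to conclude. The argument is correct and matches the paper's proof in substance.
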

\begin{proof}
	The validity of \eqref{reflect} can be directly verified using properties of refection functions provided in \cite[Lemmas 3.3 and 3.5]{Nit11}. Note that $\wh{b}$ always satisfies the bound \eqref{bbound}. Therefore, we can repeat the arguments in Subsection \ref{sec:loHo} to get the desired local H\"older continuity of $\wt{u}$, which in turn implies the local H\"older continuity of $\wh{u}$ at the boundary.
\end{proof}}

\subsection{H\"older continuity of $\wh u$}\label{sec:A3}
\begin{proof}[Proof of H\"older continuity in Lemma \ref{lem:uhat}]
If $\Omega = \mathbb R^n$, then the H\"older continuity of $\wh u$ follows directly from Lemma \ref{Holder-local}.

When $\Omega\subset \mathbb R^n$ bounded with Lipschitz boundary $\partial\Omega$, H\"older continuity \eqref{uhat_Holder} up to boundary for $\wh u$ follows from Lemmas \ref{Holder-local} and \ref{Holder-boundary}, thanks to the fact that $\pa\O$ is compact and can be covered by finitely many balls with radius $\rho_0$ (where $\rho_0$ is in Lemma \ref{Holder-local}).
\end{proof}

\medskip
\par{\bf Acknowledgements:} The authors would like to thank Prof. Philippe Souplet for fruitful discussions and especially for pointing out the $L^\infty$-convergence in Theorem \ref{thm:quad}.


This work is partially supported by the International Training Program IGDK 1754 and NAWI Graz.


\begin{thebibliography}{00}
	\bibitem[Ama85]{Ama85} H. Amann. Global existence for semilinear parabolic systems. \textit{Journal f\"ur die reine und angewandte Mathematik}. 360 (1985) 47-83.
	\bibitem[CC17]{CC17} M.J. C\'aceres, Jos\'e A. Ca\~nizo. Close-to-equilibrium behaviour of quadratic reaction-diffusion systems with detailed balance. \textit{Nonlinear Analysis}. 159 (2017) 62-84.
	\bibitem[CDF14]{CDF14} J.A. Ca\~nizo, L. Desvillettes, K. Fellner. Improved duality estimates and applications to reaction-diffusion equations. \textit{Communications in Partial Differential Equations}. 39 no.6 (2014) 1185-1204.
	\bibitem[CGV]{CGV} M.C. Caputo, T. Goudon, A.F. Vasseur. Solution of the $4$-species quadratic reaction-diffusion system are bounded and $C^{\infty}$-smooth, in any space dimension. \textit{arXiv:1709.05694v1}.
	\bibitem[DF08]{DF08} L. Desvillettes, K. Fellner. Entropy methods for reaction-diffusion equations: slowly growing a-priori bounds. \textit{Revista Matem\'atica Iberoamericana}. 24.2 (2008) 407-431.
	\bibitem[DFPV07]{DFPV07} L. Desvillettes, K. Fellner, M. Pierre, J. Vovelle. About Global Existence for Quadratic Systems of Reaction-Diffusion. \textit{Advanced Nonlinear Studies}. 7 no.3 (2007) 491-511.
\bibitem[DFT17]{DFT17} L. Desvillettes, K. Fellner, B. Q. Tang. Trend to equilibrium for reaction-diffusion systems arising from complex balanced chemical reaction networks. \textit{SIAM Journal on Mathematical Analysis}. 49 no.4 (2017) 2666-2709.

	\bibitem[Dun04]{Dun04} N. Dungey. Heat kernel estimates and Riesz transforms on some Riemannian covering manifolds. {\it Mathematische Zeitschift}. 247 (2004) 765-794.
	\bibitem[FT17]{FT17} K. Fellner, B.Q. Tang. Explicit exponential convergence to equilibrium for nonlinear reaction-diffusion systems with detailed balance condition. \textit{Nonlinear Analysis}. 159 (2017) 145-180.
	\bibitem[FT17a]{FT17a} K. Fellner, B.Q. Tang. Convergence to equilibrium for renormalised solutions to nonlinear chemical reaction-diffusion systems. \textit{Zeitschrift f\"ur Angewandte Mathematik und Physik}. 69.3 (2018) first online article 54.
\bibitem[FLS16]{FLS16} K. Fellner, E. Latos, T. Suzuki. Global classical solutions for mass-conserving, (super)-quadratic reaction-diffusion systems in three and higher space dimensions. \textit{Discrete and Continuous Dynamical Systems - Series B}, 21 no.10, (2016) 3441--3462.
\bibitem[FLT]{FLT} K. Fellner, E. Latos, B.Q. Tang. Global regularity and convergence to equilibrium of reaction-diffusion systems with nonlinear diffusion. \textit{arXiv:1711.02897}.
	\bibitem[GV10]{GV10} Th. Goudon, A.Vasseur. Regularity analysis for systems of reaction-diffusion equations. \textit{Annales scientifiques de l'\'Ecole normale sup\'erieure}. (4) 43 (2010) no. 1, 117-142.
	\bibitem[Hen81]{Hen81} D. Henry. Geometric Theory of Semilinear Parabolic Equations. \textit{Lecture Notes in Mathematics} vol. 840, Springer-Verlag, Berlin-New York, 1981.
	\bibitem[HMP87]{HMP87} S. Hollis, R.H. Martin, M. Pierre. Global existence and boundedness in reaction-diffusion systems. \textit{SIAM Journal on Mathematical Analysis}. 18 (1987) 744-761.
	\bibitem[Kan90]{Kan90} J.I. Kanel, Solvability in the large of a system of reaction-diffusion equations with the
	balance condition, \textit{Differentsial\'ye Uravneniya} 26 (1990), 448--458 (English translation:
	\textit{Differential Equations} 26 (1990), 331–339.
	\bibitem[KS80]{KS80} N. V. Krylov and M. V. Safonov. A property of the solutions of parabolic equations
	with measurable coefficients. {\it Izvestiya Akademii Nauk SSSR. Seriya Matematicheskaya}. 44(1) (1980) 161-175.
	\bibitem[LSU68]{LSU88} O. Lady\v{z}enskaja, V.A. Solonnikov, N.N. Ural'ceva. {\it Linear and quasi-linear equations of parabolic type}. Vol. 23. American Mathematical Soc., 1968.
	\bibitem[LSY12]{LSY12} E. Latos, T. Suzuki, and Y. Yamada. Transient and asymptotic dynamics of a prey-predator system with diffusion. \textit{Mathematical Methods in the Applied Sciences}. 35 (2012) no. 9. 1101-1109.	
	\bibitem[Lie96]{Lie96} G.M. Lieberman. {\it Second order parabolic differential equations}. World Scientific 1996. 
	\bibitem[LY86]{LY86} P. Li, S.T. Yau. On the parabolic kernel of the Schr\"odinger operator. {\it Acta Mathematica} 156 (1986) 153-201.
	\bibitem[Mor83]{Mor83} X. Mora. Semilinear parabolic problems define semiflows on $C^k$ 	spaces. {\it Trans. Amer. Math. Soc.} 278 (1983) 21--55.
	\bibitem[Mor89]{Mor89} J. Morgan. Global existence for seminlinear parabolic systems. \textit{SIAM Journal on Mathematical Analysis}. 20 (1989) 1128-1144.
	\bibitem[Nash58]{Nash58} J. Nash. Continuity of solutions of parabolic and elliptic equations. {\it American Journal of Mathematics}. 80.4 (1958) 931-954.
	\bibitem[Nit11]{Nit11} R. Nittka. Regularity of solutions of linear second order elliptic and parabolic boundary value problems on Lipschitz domains. \textit{Journal of Differential Equations.} 251 (2011) 860-880.
	\bibitem[Pie10]{Pie-Survey} M. Pierre. Global existence in reaction-diffusion systems with control of mass: a survey. {\it Milan Journal of Mathematics}. 78.2 (2010): 417-455.
	\bibitem[Pao92]{Pao} C.V. Pao. Nonlinear Parabolic and Elliptic Equations. Springer Science+Business Media New York (1992).
	
\bibitem[Paz83]{pazy} A.~Pazy.
Semigroups of Linear Operators and Applications to Partial Differential Equations.
Applied Mathematical Sciences. Springer, 1983.
\bibitem[Pie03]{Pie03} M. Pierre. Weak solutions and super-solutions in $L^1$ for reaction-diffusion systems. {\it Nonlinear
Evolution Equations and Related Topics}, Birkhuser Basel, 2003. 153-168.
	\bibitem[PS97]{PS97} M. Pierre, D. Schmitt. Blowup in reaction-diffusion systems with dissipation of mass. {\it SIAM Journal on Mathematical Analysis}. 28.2 (1997) 259--269.
	\bibitem[PSY17]{PSY17} M. Pierre, T. Suzuki, Y. Yamada. Dissipative reaction-diffusion systems with quadratic growth. (2017) \textit{To appear in Indiana University Mathematics Journal}.
	\bibitem[PSZ16]{PSZ16} M. Pierre, T. Suzuki, R. Zou. Asymptotic behavior of solutions to chemical reaction-diffusion systems. \textit{Journal of Mathematical Analysis and Applications}. 450.1 (2017) 152-168.
	\bibitem[Rot84]{Rot84} F. Rothe. Global Solutions of Reaction-diffusion Systems. \textit{Lecture Notes in Mathematics} vol. 1072, Springer-Verlag, Berlin, 1984.
	\bibitem[Sou18]{Sou18} P. Souplet. Global existence for reaction--diffusion systems with dissipation of mass and quadratic growth. \textit{Journal of Evolution Equations}. 18 (2018) 1713--1720. 
	\bibitem[SY13]{SY13} T. Suzuki, Y. Yamada. A Lotka-Volterra system with diffusion. \textit{Nonlinear Analysis in Interdisciplinary
	Sciences-Modellings, Theory and Simulations}, GAKUTO Internat. Ser. Math. Sci.
	Appl., vol. 36, Gakkotosho, Tokyo (2013) 215-236.
	\bibitem[Tan17]{Tan17} Bao Q. Tang. Close-to-equilibrium regularity for reaction-diffusion systems. {\it Journal of Evolution Equations}.  18.2 (2018) 845-869.
	\bibitem[Tan17a]{Tan17a} Bao Q. Tang. Global classical solutions to reaction-diffusion systems in one and two dimensions. {\it Communications in Mathematical Sciences}. 16.2 (2018) 411-423.
	\bibitem[Wan97]{Wan97} J. Wang. Global heat kernel estimates. \textit{Pacific Journal of Mathematics}. 178.2 (1997).
\end{thebibliography}
\end{document}